\documentclass{amsart}
\usepackage[all]{xy}
\usepackage{amssymb,a4wide}

\textheight=670pt
\topmargin=-20pt

\newcommand{\IR}{\mathbb R}
\newcommand{\IN}{\mathbb N}
\newcommand{\IQ}{\mathbb Q}

\newcommand{\IZ}{\mathbb Z}
\newcommand{\I}{\mathcal I}
\newcommand{\U}{\mathcal U}

\newcommand{\IC}{\mathbb C}
\newcommand{\e}{\varepsilon}
\newcommand{\w}{\omega}
\newcommand{\C}{\mathcal C}
\newcommand{\Ra}{\Rightarrow}

\newtheorem{theorem}{Theorem}[section]
\newtheorem{problem}[theorem]{Problem}
\newtheorem{lemma}[theorem]{Lemma}
\newtheorem{corollary}[theorem]{Corollary}
\newtheorem{claim}[theorem]{Claim}
\newtheorem{proposition}[theorem]{Proposition}
\theoremstyle{definition}
\newtheorem{definition}[theorem]{Definition}
\newtheorem{remark}[theorem]{Remark}

\title{The continuity of Darboux injections between manifolds}
\author{Iryna Banakh and Taras Banakh}
\address{I.Banakh: Ya. Pidstryhach Institute for Applied Problems of Mechanics and Mathematics of NASU, Naukova 3b, Lviv}
\address{T.Banakh: Ivan Franko National University of Lviv (Ukraine) and Jan Kochanowski University in Kielce (Poland)}

\email{ibanakh@yahoo.com, t.o.banakh@gmail.com}
\keywords{Connected set, homeomorphism, Darboux function, manifold}
\subjclass[2010]{54D05, 54D30, 54C05, 54C08, 55M05, 55N05, 55N10, 55P20, 57N05, 57N10, 57N12, 57N65, 57P99}
\begin{document}
\begin{abstract} We prove that an injective map $f:X\to Y$ between metrizable spaces $X,Y$ is continuous if for every connected subset $C\subset X$ the image $f(C)$ is connected and one of the following conditions is satisfied:
\begin{itemize}
\item $Y$ is a 1-manifold and $X$ is compact and connected;
\item $Y$ is a 2-manifold and $X$ is a closed 2-manifold;
\item $Y$ is a 3-manifold and $X$ is a rational homology 3-sphere.
\end{itemize}
This gives a partial answer to a problem of Willie Wong, posed on Mathoverflow.
\end{abstract}
\maketitle

\section{Introduction}

In \cite{Wong} Willie Wong asked the following intriguing and still open

\begin{problem}\label{prob1} Is each bijective Darboux map $f:\IR^n\to\IR^n$ a homeomorphism?
\end{problem}

We recall that a map $f:X\to Y$ between topological spaces is {\em Darboux} if for any connected subspace $C\subset X$ the image $f(C)$ is connected. Injective (bijective) Darboux functions will be called {\em Darboux injections} (resp.  {\em Darboux bijections}). A bijection of a space onto itself will be called a {\em self-bijection}.

Wong's Problem~\ref{prob1} was motivated by the following old result of Pervine and Levine \cite{PL} who generalized an earlier result of Tanaka \cite{Tan}.

\begin{theorem}[Tanaka--Pervine--Levine]\label{t:TPL} A bijective map $f:X\to Y$ between semilocally-connected Hausdorff topological spaces  is a homeomorphism if and only if both functions $f$ and $f^{-1}$ are Darboux.
\end{theorem}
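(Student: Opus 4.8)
The plan is to prove the nontrivial implication, since the forward direction is immediate: a continuous map carries connected sets to connected sets, so whenever $f$ is a homeomorphism both $f$ and $f^{-1}$ are Darboux. For the converse, suppose $f$ and $f^{-1}$ are both Darboux. Because the hypotheses on $X$ and $Y$ are symmetric, it suffices to prove that $f$ is continuous at an arbitrary point $x_0\in X$; applying the same argument to $f^{-1}$ then shows $f^{-1}$ is continuous, and hence $f$ is a homeomorphism.

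Fix $x_0\in X$, put $y_0:=f(x_0)$, and let $V\subseteq Y$ be an open neighborhood of $y_0$. Using that $Y$ is semilocally connected at $y_0$, I would choose an open set $W$ with $y_0\in W\subseteq V$ such that $Y\setminus W$ has only finitely many connected components $D_1,\dots,D_n$ (if $Y\setminus W=\emptyset$ then $f^{-1}(V)=X$ and there is nothing to prove). For each $i$ set $A_i:=f^{-1}(D_i)$; since $f$ is a bijection and $f^{-1}$ is Darboux, each $A_i$ is a nonempty connected subset of $X$, and $X\setminus f^{-1}(W)=A_1\cup\dots\cup A_n$.

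The heart of the proof is the claim that $x_0\notin\overline{A_i}$ for every $i$. Suppose, to the contrary, that $x_0\in\overline{A_i}$ for some $i$. Since $y_0\in W$ and $W\cap D_i=\emptyset$, we have $x_0\notin A_i$, so $A_i\cup\{x_0\}$ lies between the connected set $A_i$ and its closure and is therefore connected. As $f$ is Darboux, $f(A_i\cup\{x_0\})=D_i\cup\{y_0\}$ is connected. But $W$ is open in $Y$ and $W\cap(D_i\cup\{y_0\})=\{y_0\}$, while $\{y_0\}$ is closed in $Y$ because $Y$ is Hausdorff; hence $\{y_0\}$ is a nonempty proper clopen subset of $D_i\cup\{y_0\}$, a contradiction. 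This proves the claim.

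Consequently $U:=X\setminus\bigcup_{i=1}^n\overline{A_i}$ is an open neighborhood of $x_0$ — here the finiteness of the family $\{D_1,\dots,D_n\}$ furnished by semilocal connectedness is essential, as it guarantees that the union of the closures $\overline{A_i}$ is again closed — and $f(U)\subseteq W\subseteq V$ because $U\subseteq f^{-1}(W)$. Thus $f$ is continuous at $x_0$, which finishes the proof. The only delicate points I anticipate are the reduction to a \emph{finite} decomposition of $Y\setminus W$ (which genuinely requires semilocal connectedness and cannot be relaxed to local connectedness) and the twofold use of the Darboux hypothesis: on $f^{-1}$, to make each piece $A_i$ connected, and on $f$, to prevent $x_0$ from being a limit point of any $A_i$.
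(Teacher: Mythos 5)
Your proof is correct, and in fact the paper offers nothing to compare it with: Theorem~\ref{t:TPL} is stated as a known result and attributed to \cite{PL} without proof, so your argument is a genuine addition rather than a variant of the authors' reasoning. The structure is sound: the forward implication is immediate; the symmetry reduction is legitimate because the hypotheses on $X$ and $Y$ and on $f$, $f^{-1}$ are symmetric; each $A_i=f^{-1}(D_i)$ is connected because $f^{-1}$ is Darboux and $D_i$ is a component of $Y\setminus W$; the clopen-singleton contradiction correctly rules out $x_0\in\overline{A_i}$ (using that $W$ is open, $D_i\cap W=\emptyset$, and $\{y_0\}$ is closed by Hausdorffness); and the finiteness of the decomposition is exactly what makes $\bigcup_i\overline{A_i}$ closed, so $U$ is a neighborhood of $x_0$ mapped into $W\subseteq V$. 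You also correctly isolate where each hypothesis enters. The only pedantic caveat is that the paper's phrasing ``base of neighborhoods $O_x$'' does not literally say the $O_x$ are open; if one reads them merely as neighborhoods, replace $W$ by its interior in the clopen-singleton step, which changes nothing.
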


A topological space $X$ is {\em semilocally-connected} if each point $x\in X$ has a base of neighborhoods $O_x\subset X$ whose complements $X\setminus O_x$ have only finitely many connected components.  Since the Euclidean spaces are semilocally-connected, Theorem~\ref{t:TPL} implies that a bijective map $f:\IR^n\to\IR^m$ is a homeomorphism if both functions $f$ and $f^{-1}$ are Darboux.

We do not know the answer to Wong's Problem~\ref{prob1} even for $n=2$, but in order to put this problem into a wider perspective, we ask a more general

\begin{problem}\label{prob2} Recognize pairs of topological spaces $X,Y$ for which every Darboux injection $f:X\to Y$ is continuous.
\end{problem}

In this paper we shall give a partial answer to Problem~\ref{prob2} for Darboux injections between manifolds of dimension $n\le 3$. 

A metrizable space $X$ is called 
\begin{itemize}
\item an {\em $n$-manifold} if every point $x\in X$ has an open neighborhood $O_x$, homeomorphic to an open subset of the half-space $\IR^n_+:=\{(x_1,\dots,x_n)\in\IR^n:x_1\ge 0\}$;
\item a {\em closed $n$-manifold} if $X$ is compact and every point $x\in X$ has an open neighborhood $O_x$, homeomorphic to $\IR^n$;
\item a ({\em closed}\/) {\em manifold} if it is a (closed) $n$-manifold for some $n\in\mathbb N$;
\item a {\em rational homology $n$-sphere} if $X$ is a closed manifold with singular homology groups $H_k(X;\IQ)\approx H_k(S^n;\IQ)$ for all $k\ge 0$.
\end{itemize}
Here $S^n:=\{x\in\IR^{n+1}:\|x\|=1\}$ stands for the $n$-dimensional sphere in the Euclidean space $\IR^{n+1}$, and $\approx$ denotes the isomorphism of groups. 

In Proposition~\ref{p:hs} we prove that a connected closed 3-manifold $X$ is a rational homology 3-sphere if and only if $H_1(X;\IQ)=0$ if and only if the homology group $H_1(X)$ is finite if and only if the fundamental group $\pi_1(X)$ has finite abelianization. According to the Poincar\'e Conjecture (proved to be true by Grigory  Perelman), each connected closed $3$-manifold with trivial fundamental group $\pi_1(X)$ is homeomorphic to the $3$-dimensional sphere. On the other hand, there are infinitely many topologically non-homeomorphic rational homology 3-spheres, see the MO-discussion at {\small\tt  https://mathoverflow.net/q/311063}.


 The following theorem is a main result of this paper.
 
\begin{theorem}\label{t:main} A Darboux injection $f:X\to Y$ between metrizable spaces is continuous if one of the following conditions is satisfied:
\begin{enumerate}
\item $Y$ is a $1$-manifold and $X$ is compact and connected;
\item $Y$ is a $2$-manifold and $X$ is a closed $2$-manifold;
\item $Y$ is a $3$-manifold and $X$ is a rational homology $3$-sphere.
\end{enumerate}
\end{theorem}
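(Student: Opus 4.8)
The plan is to prove all three cases by the same strategy: suppose $f$ is discontinuous at some point $x_0\in X$, and derive a contradiction from the impossibility of embedding (in a Darboux-injective way) the local structure of $X$ near $x_0$ into $Y$ once a point ``jumps away'' in the image. First I would establish the following general principle, presumably as a lemma: if $f\colon X\to Y$ is a Darboux injection and $X$ is locally connected (which manifolds are), then discontinuity at $x_0$ produces a sequence $x_n\to x_0$ with $f(x_n)\to y_\infty\ne f(x_0)$; using local connectedness one upgrades this to a \emph{connected} set $C\ni x_0$ with $x_n\in C$, so that $f(C)$ is connected and contains both $f(x_0)$ and points arbitrarily close to $y_\infty$. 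The key extraction is then a closed connected set $D\subset X$ whose image $f(D)$ is a connected subset of $Y$ that is ``pinched'': it meets a small ball around $y_\infty$ along the tail $\{f(x_n)\}$ but the only way back to $f(x_0)$ passes through a bottleneck whose preimage is a single point.

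The heart of the matter is a \emph{separation / cut-point} argument in $Y$, carried out dimension by dimension. In case (1), $Y$ is a $1$-manifold, so removing a single point $f(x_0)$ disconnects a neighborhood; since $X$ is compact and connected, $f(X)$ is a compact-like connected set in the $1$-manifold, and one shows a Darboux injection from a compact connected space into a $1$-manifold is forced to be continuous because any discontinuity would require $f(X\setminus\{x_0\})$ to be connected while lying on ``both sides'' of the missing cut point $f(x_0)$ — this contradicts that $X\setminus\{x_0\}$, if it stays connected, cannot map onto a disconnected set, or, if it is disconnected, one counts the finitely many components and plays them against the local order-structure of $\IR$. For cases (2) and (3), the replacement for ``cut point'' is a \emph{codimension-one separation} statement: by Alexander duality (or the Jordan–Brouwer type results packaged in the homology hypotheses on $X$), a suitably small $(n-1)$-sphere $S\subset X$ around $x_0$ bounds and separates, and its image $f(S)$, being connected, is confined to one side of a separating hypersurface in $Y$ unless $f$ is continuous along $S$; the homology hypothesis on $X$ (closed $2$-manifold in case (2); rational homology $3$-sphere in case (3), equivalently $H_1(X;\IQ)=0$ by Proposition~\ref{p:hs}) is exactly what forces the relevant Mayer–Vietoris or Alexander-duality class to be nonzero, so a ``jump'' cannot be absorbed and we get the contradiction.

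Concretely for case (2): if $f$ is discontinuous at $x_0$, choose a small closed disc $B$ around $x_0$ with boundary circle $\partial B$; the complement $X\setminus\{x_0\}$ deformation-retracts in a neighborhood onto $\partial B$, and because $X$ is a closed surface, $\partial B$ is non-separating-in-the-small in a controlled way. The image $f(\partial B)$ is a connected subset of the $2$-manifold $Y$ avoiding $f(x_0)$; push a sequence $x_n\to x_0$ inside $B$ and note $f(x_n)\to y_\infty\ne f(x_0)$, so $y_\infty\in\overline{f(B)}$. Now pick a small arc $\alpha$ through $x_0$ crossing $\partial B$ transversally; $f(\alpha)$ is connected, contains $f(x_0)$, but its two ``halves'' relative to $x_0$ must limit into the cluster near $y_\infty$ — producing a connected set $f(\alpha)\cup f(\partial B)$ in $Y$ whose preimage structure violates injectivity after invoking a local separation lemma in the plane $Y$ (a connected set cannot simultaneously realize the ``figure-eight'' linking pattern forced here while its preimage in the surface is embedded). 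Case (3) is the analogous argument one dimension up, where a small $2$-sphere $S\subset X$ around $x_0$ bounds a ball, and the vanishing of $H_1(X;\IQ)$ is used to conclude that the Čech cohomology of $f(S)$ in $Y$ cannot carry the obstruction that a genuine jump would require; here I expect to need the fact (again via Proposition~\ref{p:hs}) that rational homology $3$-spheres have finite $H_1$, so there is no ``room'' in $Y$ for an essential separating $2$-sphere to be avoided.

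The main obstacle, and where I would spend the most care, is case (3): unlike the $1$- and $2$-dimensional cases, there is no elementary cut-point or Jordan-curve substitute, so the contradiction must be drawn from honest (Čech) homology/Alexander-duality bookkeeping about how a connected set can sit in a $3$-manifold near a point it ``should'' contain but does not. Getting the Darboux hypothesis to interact correctly with Alexander duality — in particular ensuring the relevant connected test-set in $X$ has connected image with the right (co)homological shadow in $Y$, despite $f$ being merely injective and not assumed to be an embedding or even continuous anywhere — is the delicate point; I anticipate needing the homology hypothesis on $X$ precisely to rule out the ``collar swallows the jump'' scenario, and the proof of Proposition~\ref{p:hs} (which we may assume) is what makes that hypothesis checkable.
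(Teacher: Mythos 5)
Your plan for cases (2) and (3) has a circularity at its core. You propose to take a small $(n-1)$-sphere $S\subset X$ around the point of discontinuity, look at its image $f(S)$ in $Y$, and apply Alexander duality / Jordan--Brouwer separation to $f(S)$. But $f$ is not assumed continuous anywhere, so all the Darboux property gives you is that $f(S)$ is \emph{connected}: it need not be compact or closed in $Y$, it carries no usable \v Cech cohomology or duality class, and it need not separate anything. Knowing that $f(S)$ is an honest separating hypersurface in $Y$ is essentially equivalent to knowing that $f{\restriction}S$ is an embedding, which is what is to be proved. The paper runs the separation argument in the opposite direction: it takes a genuine codimension-one separator $S$ \emph{in $Y$} (a circle, resp.\ a compact connected $2$-variety, supplied by the structure of $Y$ and chosen to pass through infinitely many terms of a convergent sequence $y_n\to y$) and \emph{pulls it back}: since $f$ is Darboux, $f^{-1}(S)$ contains a closed separator of $X$ (Lemma~\ref{l:main}). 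The homological hypotheses on $X$ --- not on any image set --- then force this separator to be large: a closed $n$-manifold cannot be separated by a fence (Proposition~\ref{p:2not}), and for a rational homology $3$-sphere an irreducible separator cannot itself be separated by a fence (Claim~\ref{cl:main3}, via Mayer--Vietoris together with Proposition~\ref{p:multi}). One then applies the \emph{lower-dimensional} case of the theorem to $f$ restricted to that separator, mapping into $S$, to see it is a homeomorphism onto $S$; a family of such separators through prescribed sequences yields continuity of $f^{-1}$, and a final componnectedness argument gives continuity of $f$. The proof is thus an induction on the dimension of the target, which your parallel treatment of the three cases does not capture.

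There is a second gap in case (1): you invoke local connectedness of $X$ ``as manifolds are,'' but there $X$ is only assumed compact, connected and metrizable. That $X$ is locally connected is a theorem that must be proved (Lemma~\ref{l7} in the paper, via an accumulation-point argument in the hyperspace of $\bar O_x$) before Hahn--Mazurkiewicz yields path-connectedness and the order-theoretic arguments on $\bar\IR$ can be carried out; this is the hardest step of the one-dimensional case and is missing from your sketch.
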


This theorem follows from Corollaries~\ref{c:main1}, \ref{c:main2}, and \ref{c:main3}, proved in Sections~\ref{s:main1}, \ref{s:main2} and \ref{s:main3}, respectively. In fact, our results hold more generally for Darboux injections into generalizations of manifolds, called varieties.  Varieties are introduced and studied in Section~\ref{s:nvar}. 

Theorem~\ref{t:main} has the following corollary, related to  Wong's Problem~\ref{prob1}.

\begin{corollary} Any Darboux self-bijection $f:S^n\to S^n$ of the sphere of dimension $n\le 3$ is a homeomorphism.
\end{corollary}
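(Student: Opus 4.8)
The plan is to observe that the sphere $S^n$ simultaneously satisfies the hypotheses imposed on the domain \emph{and} on the codomain in each clause of Theorem~\ref{t:main}, so that the theorem applies directly with $X=Y=S^n$ and yields the continuity of $f$; the homeomorphism property then follows from a standard compactness argument.

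First I would run through the three cases. For $n=1$, the circle $S^1$ is a compact connected $1$-manifold, so clause~(1) of Theorem~\ref{t:main} applies with $X=Y=S^1$ and shows that any Darboux bijection $f:S^1\to S^1$ is continuous. For $n=2$, the sphere $S^2$ is a closed $2$-manifold (and in particular a $2$-manifold), so clause~(2) applies with $X=Y=S^2$. For $n=3$, the only point that deserves a remark is that $S^3$ is a rational homology $3$-sphere: it is a closed $3$-manifold and, tautologically, $H_k(S^3;\IQ)\approx H_k(S^3;\IQ)$ for all $k\ge 0$; being also a $3$-manifold, clause~(3) applies with $X=Y=S^3$. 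In each case we conclude that the Darboux self-bijection $f:S^n\to S^n$ is continuous.

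It then remains to upgrade ``continuous bijection'' to ``homeomorphism''. Since $S^n$ is compact and Hausdorff and $f$ is a continuous bijection, $f$ carries closed (hence compact) subsets of $S^n$ to compact, hence closed, subsets of $S^n$; thus $f$ is a closed map, which forces $f^{-1}$ to be continuous. Therefore $f$ is a homeomorphism.

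I do not expect a genuine obstacle in this argument, since the statement is a direct specialization of Theorem~\ref{t:main}: note in particular that we need only $f$ itself to be Darboux, not $f^{-1}$, so no appeal to the Tanaka--Pervine--Levine Theorem~\ref{t:TPL} is required. The only things to be careful about are the bookkeeping of which hypothesis is demanded of the domain versus the codomain in each dimension, and the trivial verification that $S^3$ qualifies as a rational homology $3$-sphere.
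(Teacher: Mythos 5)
Your proposal is correct and is exactly the argument the paper intends: the corollary is stated as an immediate consequence of Theorem~\ref{t:main}, obtained by checking that $S^n$ satisfies both the domain and codomain hypotheses of the relevant clause, and the upgrade from continuous bijection to homeomorphism via compactness is standard (indeed, the underlying Corollaries~\ref{c:main1}--\ref{c:main3} already give topological embeddings, so this step is even available for free).
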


Another corollary of Theorem~\ref{t:main} establishes a dimension-preserving  property of Darboux injections between manifolds.

\begin{corollary} Let $f:X\to Y$ be a Darboux injection between manifolds. If $\dim(Y)\le 3$, then $\dim(X)\le\dim(Y)$.
\end{corollary}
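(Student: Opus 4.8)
Let $f:X\to Y$ be a Darboux injection between manifolds with $\dim(Y)\le 3$; we must show $\dim(X)\le\dim(Y)$.

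The plan is to reduce the statement to Theorem~\ref{t:main} by restricting $f$ to a round sphere sitting inside $X$. First I would dispose of the degenerate case $\dim(Y)=0$: then $Y$ is discrete, so every connected subset of $X$ has a one-point image, and injectivity of $f$ forces all connected subsets of $X$ to be singletons, whence $\dim(X)=0$. Thus I may assume $n:=\dim(Y)\in\{1,2,3\}$ and, arguing by contradiction, that $\dim(X)\ge n+1$. Since the interior points of a manifold (those with a neighbourhood homeomorphic to $\IR^{\dim(X)}$) are dense, $X$ contains an open set homeomorphic to an open subset of $\IR^{\dim(X)}$, and inside it I can fix a topological copy $S$ of the sphere $S^n$ that bounds a closed $(n+1)$-ball $D$ contained in this chart, together with a point $p$ in the interior of $D$ (so $p\notin S$) and an arc $A\subseteq D$ joining $p$ to a point $q\in S$ with $A\cap S=\{q\}$.

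Next I would observe that $f|_S\colon S\to Y$ is again a Darboux injection, since connectedness of a subset of $S$ is already witnessed in $X$, and that the three cases of the hypothesis match the three cases of Theorem~\ref{t:main}: for $n=1$ the sphere $S$ is compact and connected, for $n=2$ it is a closed $2$-manifold, and for $n=3$ it is a rational homology $3$-sphere, while $Y$ is an $n$-manifold. Hence in each case Theorem~\ref{t:main} shows that $f|_S$ is continuous, and since $S$ is compact and $Y$ is metrizable, $f|_S$ is a topological embedding; so $f(S)$ is homeomorphic to $S^n$, i.e. a closed $n$-manifold with empty boundary embedded in the $n$-manifold $Y$. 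Applying invariance of domain in charts — and using that $S^n$ has empty manifold boundary, so the embedded sphere cannot meet $\partial Y$ — I would conclude that $f(S)$ is open in $Y$; being also compact it is closed, hence a clopen connected subset of $Y$, that is, a connected component of $Y$.

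Finally I would use the Darboux property once more: $A\cup S$ is connected in $X$, so $f(A\cup S)=f(A)\cup f(S)$ is connected; it meets the clopen set $f(S)$ at $f(q)$, so it is contained in $f(S)$, forcing $f(p)\in f(S)$. But $p\notin S$ and $f$ is injective, so $f(p)\notin f(S)$, the desired contradiction, which yields $\dim(X)\le\dim(Y)$. The real content here is Theorem~\ref{t:main}; the remaining steps are soft, and the only point needing care is the clopenness of $f(S)$ in $Y$, where invariance of domain has to be invoked and the possibility that $Y$ has nonempty boundary handled — this is the step I expect to be the main (if routine) obstacle.
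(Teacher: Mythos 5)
Your proposal is correct and follows essentially the same route as the paper: embed an $n$-sphere $S$ in a chart of $X$, note that $f{\restriction}S$ is still a Darboux injection falling under one of the three cases of Theorem~\ref{t:main}, use compactness plus Invariance of Domain to see that $f(S)$ is a clopen (hence full) connected component of $Y$, and then contradict injectivity via the Darboux property. The only cosmetic difference is the last step, where you run an arc from an interior point of the ball bounded by $S$, while the paper applies the Darboux property to the whole connected component of $X$ containing $S$ to conclude that this component equals $S$; both yield the same contradiction.
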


\begin{proof} Assuming that $\dim(X)>\dim(Y)$, we can find subset $S\subset X$, homeomorphic to the $\dim(Y)$-dimensional sphere. Denote by $X'$ the connected component of $X$ containing $S$. Since $f$ is Darboux, the image $f(X')$ is connected and hence is contained in some connected component $Y'$ of $Y$. Applying Theorem~\ref{t:main},  we conclude that the restriction $f{\restriction}S:S\to Y'$ is continuous. By the compactness of $S$, the map $f{\restriction}S$ is a topological embedding and its image $f(S)$ is a closed subset of $Y'$. By the Invariance of Domain \cite[2B.3]{Hat}, the image $f(S)$ is open in $Y'$.  Being connected, the space $Y'$ coincides with its open-and-closed subset $f(S)$. Then $f(S)\subset f(X')\subset Y'=f(S)$ implies $X'=S$ (by the injectivity of $f$) and hence $\dim(X)=\dim(X')=\dim(S)=\dim(Y')=\dim(Y)$, which contradicts our assumption.
\end{proof}
 
We do not know if the triviality of the homology group $H_1(X;\IQ)$ is essential in Theorem~\ref{t:main}(3). Also our technique does not allow to generalize Theorem~\ref{t:main} to Darboux injections between $n$-manifolds of dimension $n>3$. So we ask two problems.  

\begin{problem} Is each Darboux self-bijection of the $3$-dimensional torus a homeomorphism?
\end{problem} 

\begin{problem} Is each Darboux self-bijection of the $4$-dimensional sphere a homeomorphism?
\end{problem} 

\section{Darboux injections into 1-manifolds}\label{s:main1}

In this section we prove Theorem~\ref{t:main}(1).  
This will be done in Corollary~\ref{c:main1} after long preliminary work made in Lemmas~\ref{l1}--\ref{l8}. 

It will be convenient to identify the circle $S^1$ and the sphere $S^2$ with the one-point compactifications $\bar\IR$ and $\bar\IC$ of the real line $\IR$ and the complex plane $\IC$, respectively.

Corollary~\ref{c:main1} will be derived from a series of lemmas, the first of which is well-known in Continuum Theory as the Boundary Bumping  Theorem \cite[5.4]{Nad}.

\begin{lemma}\label{l1} Let $X$ be a compact connected space, $U\subset X$ be a non-empty open set, not equal to $X$, and $C$ be a connected component of $U$. Then the closure $\bar C$ of $C$ in $X$ intersects $X\setminus U$.
\end{lemma}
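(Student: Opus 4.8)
The plan is to argue by contradiction: assume $\bar C\cap(X\setminus U)=\emptyset$, i.e.\ $\bar C\subseteq U$, and manufacture a nonempty proper clopen subset of $X$, contradicting connectedness. First I would observe that $\bar C$ is connected, being the closure of the connected set $C$, and is contained in $U$; hence by maximality of the connected component $C$ of $U$ we get $\bar C=C$, so $C$ is closed in the compact space $X$ and therefore compact. Note also that $C\ne\emptyset$ (components of the nonempty space $U$ are nonempty) and $X\setminus U\ne\emptyset$ (since $U\ne X$); both facts are needed at the end.

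Next, since $X$ is compact Hausdorff, hence normal, I would choose an open set $W$ with $C\subseteq W\subseteq\bar W\subseteq U$. As $\bar W$ is compact with $\bar W\subseteq U$, the connected component of any point of $C$ in $\bar W$ is a connected subset of $U$ containing $C$, so it equals $C$ by maximality. Thus $C$ is a connected component of the compact space $\bar W$.

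The key step then invokes the classical \v{S}ura--Bura lemma: in a compact Hausdorff space the connected component of a point coincides with its quasi-component, that is, with the intersection of all clopen sets containing it. Applied in $\bar W$, this shows $C=\bigcap_{i\in I}A_i$, the intersection of all sets $A_i$ that are clopen in $\bar W$ and contain $C$. Since $\bigcap_{i\in I}A_i=C\subseteq W$, the closed subsets $A_i\cap(\bar W\setminus W)$ of the compact space $\bar W\setminus W$ have empty intersection, so by the finite intersection property finitely many of them, say $A_{i_1},\dots,A_{i_n}$, satisfy $A_{i_1}\cap\cdots\cap A_{i_n}\cap(\bar W\setminus W)=\emptyset$. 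Hence $A:=A_{i_1}\cap\cdots\cap A_{i_n}$ is clopen in $\bar W$, contains $C$, and is contained in $W$.

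Finally, since $A$ is open in $\bar W$ and $A\subseteq W$ with $W$ open in $X$, we get $A=A\cap W$ open in $X$; and since $A$ is closed in $\bar W$ with $\bar W$ closed in $X$, the set $A$ is closed in $X$. So $A$ is clopen in $X$, nonempty (it contains $C\ne\emptyset$) and proper (it lies in $W\subsetneq X$, as $\bar W\subseteq U\ne X$), contradicting the connectedness of $X$. The only genuinely nontrivial ingredient is the \v{S}ura--Bura lemma used in the third paragraph; everything else is routine point-set topology (closures of connected sets, maximality of components, normality, and a compactness argument).
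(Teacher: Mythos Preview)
The paper does not give a proof of this lemma; it simply identifies it as the Boundary Bumping Theorem and cites Nadler~\cite[5.4]{Nad}. Your argument is the standard proof of that theorem and is correct.

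One remark: you invoke Hausdorffness of $X$ (to get normality, and to apply the \v{S}ura--Bura lemma in $\bar W$), while the lemma as stated says only ``compact connected space.'' This is harmless in context---every application in the paper is to Hausdorff, indeed metrizable, spaces, and Nadler's version is stated for metric continua---but it is worth flagging that you are reading a separation hypothesis into the statement. Without it the conclusion can fail, so your assumption is not merely cosmetic.
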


\begin{lemma}\label{l2} Any Darboux injection $f:\IR\to\IR$ is a topological embedding.
\end{lemma}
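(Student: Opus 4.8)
The plan is to recover the classical fact that an injective function $\IR\to\IR$ with the intermediate value property is automatically strictly monotone and continuous, hence a homeomorphism onto its image; the Darboux hypothesis supplies exactly the intermediate value property (applied to subintervals of $\IR$), and injectivity rules out the non-monotone behaviour.

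The first and main step is to show that $f$ is strictly monotone. Assuming the contrary, one finds points $x_1<x_2<x_3$ for which $f(x_2)>\max\{f(x_1),f(x_3)\}$ or $f(x_2)<\min\{f(x_1),f(x_3)\}$; by replacing $f$ with $-f$ we may assume the first alternative. Choosing $y$ with $\max\{f(x_1),f(x_3)\}<y<f(x_2)$ and using that the Darboux property makes $f([x_1,x_2])$ and $f([x_2,x_3])$ connected subsets of $\IR$, i.e. intervals, each of which contains $y$ strictly between two of its values, we get $s\in(x_1,x_2)$ and $t\in(x_2,x_3)$ with $f(s)=y=f(t)$ and $s\ne t$, contradicting injectivity. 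Thus $f$ is strictly monotone, and we may assume it is strictly increasing.

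Next I would deduce continuity from connectedness of $f(\IR)$. A strictly increasing function has one-sided limits everywhere, so a discontinuity at a point $x_0$ would force $\lim_{x\to x_0^-}f(x)<f(x_0)$ or $f(x_0)<\lim_{x\to x_0^+}f(x)$; in either case, any value $y$ strictly between these two limits (hence distinct from $f(x_0)$) is missed by $f$, while $f(\IR)$ obviously contains values below and above $y$. Then $f(\IR)$ is not convex, so it is a disconnected subset of $\IR$ — contradicting the fact that $f(\IR)$ is connected, which holds because $f$ is Darboux and $\IR$ is connected. Hence $f$ is continuous, and being a continuous strictly increasing injection, its image is an open interval $(a,b)$ with possibly infinite endpoints, and $f^{-1}:(a,b)\to\IR$ is a strictly increasing surjection between intervals. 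The very same one-sided-limit argument applied to $f^{-1}$ shows $f^{-1}$ is continuous, so $f:\IR\to f(\IR)$ is a homeomorphism, i.e. a topological embedding.

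I expect the only delicate point to be the monotonicity step, and within it the purely combinatorial remark that failure of monotonicity yields a triple $x_1<x_2<x_3$ with $f(x_2)$ outside the closed interval spanned by $f(x_1)$ and $f(x_3)$; everything afterwards is a routine consequence of combining injectivity with the interval-to-interval property.
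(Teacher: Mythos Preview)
Your proof is correct, but it follows a different route from the paper's. You proceed in the classical order: first deduce strict monotonicity of $f$ from injectivity plus the intermediate-value property (supplied by Darboux), then upgrade to continuity by the one-sided-limit/gap argument, and finally invert. The paper instead shows directly that $f$ is \emph{open}: for any $x$ and $\e>0$ it observes that $f((x-\e,x))$ and $f((x,x+\e))$ are disjoint connected subsets of $\IR$ whose closures both contain $f(x)$, forcing $f((x-\e,x+\e))$ to be a neighborhood of $f(x)$. Openness gives continuity of $f^{-1}$ on the open interval $f(\IR)$; then the standard fact that a continuous injection on an interval is monotone, hence a homeomorphism, finishes the argument. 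The paper's approach is shorter and avoids the combinatorial ``bad triple'' step you flag as delicate, while your approach is more self-contained and makes the role of monotonicity explicit; both yield the same conclusion with essentially the same ingredients arranged differently.
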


\begin{proof} We shall show that the map $f$ is open. Indeed, for any point $x\in \IR$ and any $\e>0$ consider the open intervals $A:=(x-\e,x)$, $B:=(x,x+\e)$ and $C:=(x-\e,x+\e)=(x-\e,x)\cup\{x\}\cup(x,x+\e)$. By the Darboux property of $f$, the sets $f(A)$, $f(A)\cup\{f(x)\}$, $f(B)$ and $f(B)\cup\{f(x)\}$ are connected and non-empty. Since $f(A)$ and $f(B)$ are disjoint, the union $f(A)\cup\{f(x)\}\cup f(B)=f(C)$ is a neighborhood of the point $f(x)$ in the real line. So, the map $f$ is open and hence the map $f^{-1}:f(\IR)\to\IR$ is continuous. The set $f(\IR)$, being open and connected in $\IR$, is homeomorphic to $\IR$. Then the map $f^{-1}:f(\IR)\to\IR$, being continuous and monotone, is a homeomorphism.
\end{proof}

\begin{lemma} Any Darboux injection $f:[0,1)\to\IR$ is a topological embedding.
\end{lemma}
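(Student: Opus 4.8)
The plan is to reduce everything to Lemma~\ref{l2} by restricting $f$ to the open interval $(0,1)$, and then to dispose of the single endpoint $0$ by hand, using the Darboux property.

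First I would note that the restriction $f{\restriction}(0,1):(0,1)\to\IR$ is again an injective Darboux map, since every connected subset of $(0,1)$ is connected in $[0,1)$. Precomposing with a homeomorphism $\IR\to(0,1)$ and applying Lemma~\ref{l2}, I conclude that $f{\restriction}(0,1)$ is a topological embedding whose image $J:=f\big((0,1)\big)$ is an open connected subset of $\IR$, hence an open interval $(a,b)$ with $-\infty\le a<b\le+\infty$. As a continuous injection of an interval, $f{\restriction}(0,1)$ is strictly monotone; since $-f$ is also a Darboux injection, I may assume without loss of generality that $f{\restriction}(0,1)$ is increasing, so that $f\big((0,t)\big)=(a,f(t))$ for every $t\in(0,1)$.

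Next I would pin down the value $f(0)$. By the injectivity of $f$ we have $f(0)\notin J=(a,b)$. On the other hand, for each $t\in(0,1)$ the set $[0,t)$ is connected, so its image $f\big([0,t)\big)=\{f(0)\}\cup(a,f(t))$ must be connected; but a subset of $\IR$ of the form $\{p\}\cup(a,q)$ with $a<q$ is connected only when $p\in[a,q]$. Hence $f(0)\in[a,f(t)]$, and since $(a,f(t)]\subseteq(a,b)$ does not contain $f(0)$, the only surviving possibility is $f(0)=a$. In particular $a=f(0)$ is a real number, $f\big([0,1)\big)=[a,b)$, and $f:[0,1)\to[a,b)$ is an increasing bijection.

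It then remains to observe that an increasing bijection $f:[0,1)\to[a,b)$ is a homeomorphism, which is precisely the assertion that $f$ is a topological embedding. On $(0,1)$ the continuity of $f$ and of $f^{-1}$ was already provided by Lemma~\ref{l2}; at the endpoints, the monotone bijection $f$ carries the basic neighbourhood $[0,s)$ of $0$ onto the basic neighbourhood $[a,f(s))$ of $a=f(0)$, and conversely $[a,s')$ onto $[0,f^{-1}(s'))$, so $f$ and $f^{-1}$ are continuous at $0$ and at $a$ as well (alternatively one may simply invoke the standard fact that a monotone bijection between intervals of the real line is a homeomorphism). Given Lemma~\ref{l2}, the only step that calls for a little care is the identification $f(0)=a$, i.e.\ showing that $f(0)$ must be the endpoint of the interval $f((0,1))$ rather than some point outside it; but once it is known that this image is an open interval not containing $f(0)$, the connectedness of the sets $f([0,t))$ forces the conclusion without difficulty, so no serious obstacle arises.
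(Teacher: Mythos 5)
Your proof is correct and takes essentially the same route as the paper's: restrict to the open interval $(0,1)$, apply Lemma~\ref{l2} to obtain a monotone embedding there, assume it is increasing, and then use the connectedness of the images of the sets $[0,t)$ to force $f(0)=\inf f\big((0,1)\big)$ and continuity at $0$. You merely spell out the details (including why the infimum is finite) that the paper's one-line "we can prove that" leaves to the reader.
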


\begin{proof} Consider the open interval $I:=(0,1)$. By Lemma~\ref{l2}, the restriction $f{\restriction}I$ is a topological embedding. In particular, it is monotone. We lose no generality assuming that $f{\restriction}I$ is increasing. Observing that for every $x\in I$ the sets $(0,x)$ and $[0,x)$ are connected, we can prove that  $f(0)=\inf f(I)$ and $f$ is continuous at $0$. So, $f$ is a topological embedding.
\end{proof}

By analogy we can prove 

\begin{lemma}\label{l4} Any Darboux injection $f:[0,1]\to\IR$ is a topological embedding.
\end{lemma}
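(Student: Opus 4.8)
The plan is to mirror, almost verbatim, the argument used for the preceding lemma, now treating both endpoints of $[0,1]$ symmetrically. First I would set $I:=(0,1)$ and apply Lemma~\ref{l2} (after identifying $I$ with $\IR$ by an affine homeomorphism): the restriction $f{\restriction}I$ is then a topological embedding onto an open connected subset of $\IR$, i.e. onto an open interval $(a,b)$ with $-\infty\le a<b\le+\infty$, and moreover $f{\restriction}I$ is strictly monotone. As in the proof of the previous lemma, I lose no generality assuming $f{\restriction}I$ is increasing, so that $f((0,x))=(a,f(x))$ and $f((x,1))=(f(x),b)$ for every $x\in I$.

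Next I would analyze the left endpoint. For each $x\in I$ the set $[0,x)$ is connected, so by the Darboux property $f([0,x))=\{f(0)\}\cup(a,f(x))$ is connected. By injectivity $f(0)\notin f((0,x))=(a,f(x))$, and a point lying outside an open interval whose union with that interval is connected must be one of its endpoints; since $f(0)\ne f(x)$, this forces $f(0)=a$. In particular $a$ is finite, $f(0)=\inf f(I)$, and since $f(x)\to a$ as $x\to 0^+$ (monotonicity), $f$ is continuous at $0$. Running the same argument with the connected sets $(x,1]$ shows $f(1)=b=\sup f(I)$ is finite and $f$ is continuous at $1$. Hence $f$ is continuous on all of $[0,1]$ (on $I$ by Lemma~\ref{l2}, and at the two endpoints by the above), and being an injective continuous map from a compact space into a Hausdorff space, $f$ is a topological embedding.

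The only point requiring a moment's care is the exclusion of $a=-\infty$ (and, symmetrically, $b=+\infty$), but this is forced by exactly the same connectedness observation: if $a=-\infty$ then $f([0,x))=\{f(0)\}\cup(-\infty,f(x))$ with $f(x)$ finite, which is disconnected unless $f(0)$ equals the finite endpoint $f(x)$ — impossible by injectivity. Everything else is the routine monotone-function bookkeeping already used in Lemmas~\ref{l2}--\ref{l4}, so I expect no real obstacle here.
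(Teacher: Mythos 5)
Your proof is correct and follows essentially the same route the paper intends: it restricts to the open interval, invokes Lemma~\ref{l2} to get monotonicity, and then uses the connectedness of $f([0,x))$ (resp.\ $f((x,1])$) to pin down $f(0)=\inf f(I)$ and $f(1)=\sup f(I)$ and deduce continuity at the endpoints, which is exactly the argument the paper gives for the half-open case and declares extends ``by analogy.'' The explicit exclusion of $a=-\infty$ and the final appeal to compactness are welcome details that the paper leaves implicit.
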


\begin{lemma}\label{l5} Any Darboux injection $f:X\to\IR$ of a path-connected Hausdorff space $X$ is a topological embedding.
\end{lemma}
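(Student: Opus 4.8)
The plan is to reduce the statement to the case of the closed interval handled in Lemma~\ref{l4}. Since $X$ is path-connected and Hausdorff, for any two points $a,b\in X$ there is a path $\gamma:[0,1]\to X$ with $\gamma(0)=a$ and $\gamma(1)=b$; the composition $f\circ\gamma:[0,1]\to\IR$ is a Darboux map (the continuous image of a connected set is connected, and $f$ is Darboux), but it need not be injective, so Lemma~\ref{l4} does not apply directly to $f\circ\gamma$. The first step I would take is therefore to replace an arbitrary path by a ``reduced'' one. Given a path $\gamma$ from $a$ to $b$ in $X$, one can extract from it an \emph{arc}, i.e.\ a subset $A\subset\gamma([0,1])$ containing $a$ and $b$ that is homeomorphic to $[0,1]$; this is a standard fact about path-connected Hausdorff spaces (every path-connected Hausdorff space is arcwise connected). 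Then $f{\restriction}A:A\to\IR$ is a Darboux injection from a space homeomorphic to $[0,1]$, so by Lemma~\ref{l4} it is a topological embedding, and in particular $f(A)$ is an arc in $\IR$ with endpoints $f(a)$ and $f(b)$.

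The next step is to upgrade this local information to openness of $f$. Fix $x\in X$ and a neighborhood $O_x$ of $x$; I want to show $f(x)$ lies in the interior of $f(O_x)$. Using path-connectedness, I would choose two points $a,b$ with $a,b\in O_x$ and, crucially, with $f(a)<f(x)<f(b)$. To see that such $a,b$ exist, note that $f$ is injective, so if $f(x)$ were, say, the maximum of $f$ on every small neighborhood of $x$, one derives a contradiction by running the arc argument through $x$: pick any point $p$ with $f(p)\neq f(x)$ (such $p$ exists unless $X$ is a single point, a trivial case), take an arc $A$ from $x$ to $p$ inside $X$; on the half of $A$ near $x$ the embedding $f{\restriction}A$ is strictly monotone, so there are points of $A$ arbitrarily close to $x$ on which $f$ takes values on the appropriate side of $f(x)$. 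Doing this on ``both sides'' (using two arcs emanating from $x$, or re-using the monotonicity of $f$ on a single arc that genuinely passes through $x$ rather than terminating there) produces the desired $a,b\in O_x$ with $f(a)<f(x)<f(b)$. Then taking an arc $A$ from $a$ to $b$ lying in $O_x$ — here I need $O_x$ itself to be path-connected, so I should first shrink $O_x$ to a path-connected neighborhood, which is possible in any manifold-like setting but in a general path-connected Hausdorff $X$ requires care; the honest fix is to instead concatenate the arc from $a$ to $x$ and the arc from $x$ to $b$ inside $O_x$ and extract an arc from this concatenation — Lemma~\ref{l4} gives that $f$ maps that arc onto a genuine subinterval of $\IR$ containing $f(x)$ in its interior, and this subinterval is contained in $f(O_x)$. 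Hence $f(x)\in\mathrm{int}\,f(O_x)$, so $f$ is open, $f^{-1}:f(X)\to X$ is continuous, and $f$ is a topological embedding.

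The main obstacle I anticipate is exactly the path-connectedness issue in the neighborhood $O_x$: a path-connected Hausdorff space need not be locally path-connected, so one cannot assume a small path-connected neighborhood exists. The clean way around this is to work with a single long arc through $x$: choose points $p,q\in X$ with $f(p)<f(x)<f(q)$ (possible once we know $f$ is non-constant near enough, via the monotonicity argument above), take arcs from $p$ to $x$ and from $x$ to $q$, concatenate and reduce to an arc $A\ni x$; Lemma~\ref{l4} makes $f{\restriction}A$ a strictly monotone embedding, so a whole relatively-open sub-arc of $A$ around $x$ maps to an open interval around $f(x)$ in $\IR$. Intersecting with any neighborhood $O_x$ and using continuity of $f{\restriction}A$ at $x$, we get that $f(O_x\cap A)$ — hence $f(O_x)$ — contains a neighborhood of $f(x)$. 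This sidesteps the need for local path-connectedness entirely and reduces everything to Lemma~\ref{l4} plus the arcwise connectedness of path-connected Hausdorff spaces.
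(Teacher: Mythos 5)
Your argument for the openness of $f$ is sound in substance, modulo two small repairs: the points $p,q$ with $f(p)<f(x)<f(q)$ need not exist when $f(x)$ is an endpoint of the interval $f(X)$ (note $f(X)$ is an interval since $X$ is connected and $f$ is Darboux), where one should settle for openness of $f$ onto its image via a one-sided arc; and the concatenated set $A_1\cup A_2$ really is an arc through $x$, because Lemma~\ref{l4} gives $f(A_1)=[f(p),f(x)]$, $f(A_2)=[f(x),f(q)]$, so injectivity of $f$ forces $A_1\cap A_2=\{x\}$ -- no extraction step that might lose $x$ is needed. The genuine gap is in your last clause: from ``$f$ is open'' you conclude ``$f^{-1}:f(X)\to X$ is continuous, \emph{and $f$ is a topological embedding}.'' Openness of an injection onto its image is equivalent to the continuity of $f^{-1}$; it says nothing about the continuity of $f$ itself, which is the other half of being an embedding and is in fact the entire point of the lemma (and of the paper's title). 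A continuous bijection from an interval onto a Hausdorff space need not be a homeomorphism, so this cannot be waved through.

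The missing half needs a second application of the Darboux property, and it is where the paper's proof does its real work. Fix $x\in X$ and $\e>0$. The set $J\setminus(f(x)-\e,f(x)+\e)$, where $J=f(X)$, has at most two connected components, each an interval; by the continuity of $f^{-1}$ (which you have established) their preimages are path-connected. If $x$ lay in the closure of one such preimage $P$, then $\{x\}\cup P$ would be connected while its image $\{f(x)\}\cup f(P)$ is disconnected, contradicting the Darboux property. Hence $f^{-1}\bigl((f(x)-\e,f(x)+\e)\bigr)$ is a neighborhood of $x$, and $f$ is continuous at $x$. This is precisely the argument the paper runs with its arcs $I_n$ (showing $X\setminus I_n=f^{-1}((f(b_n),b_\infty))$ cannot accumulate at $x$); without some version of it your proof establishes only half of the statement.
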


\begin{proof} Consider  four possible cases. 
\smallskip

1. Assume that $\inf f(X)$ and $\sup f(X)$ belong to $f(X)$. In this case we can find points $a,b\in X$ such that $f(a)=\inf f(X)$ and $f(b)=\sup f(X)$. Since the space $X$ is path-connected, there exists a subspace $I\subset X$ that contains the points $a,b$ and is homeomorphic to the closed interval $[0,1]$. Then $f(I)=[f(a),f(b)]=f(X)$ and hence $X=I$. By Lemma~\ref{l4}, $f{\restriction}I$ is a topological embedding.
\vskip3pt

2. Assume that $\inf f(X)\in f(X)$ but $\sup f(X)\notin f(X)$. In this case we can choose a point $a\in X$ with $f(a)=\inf f(X)$ and a sequence of points $\{b_n\}_{n\in\w}\subset X$ such that $(f(b_n))_{n\in\w}$ is an increasing sequence of real numbers with $\lim_{n\to\infty}f(b_n)=b_\infty:=\sup f(X)$.

Since $X$ is path-connected, for any $n\in\w$ there exists a continuous injective map $\gamma_n:[0,1]\to X$ such that $\gamma_n(0)=a$ and $\gamma_n(1)=b_n$. Put $I_n:=\gamma_n([0,1])$. By Lemma~\ref{l4}, the restriction $f{\restriction}I_n:I_n\to\IR$ is a topological embedding and hence  $f(I_n)=[f(a),f(b_n)]$. The continuity of the restrictions $f^{-1}{\restriction}[f(a),f(b_n)]$ implies the continuity of the map $f^{-1}:f(X)\to X$. To see that $f$ is continuous, take any point $x\in X$ and find $n\in\w$ such that $f(b_n)>x$. The continuity of the map $f^{-1}$ implies that the set $X\setminus I_n=f^{-1}((b_n,b_\infty))$ is path-connected. Assuming that this set contains $x$ in its closure, we would conclude that $f(\{x\}\cup(X\setminus I_n))=\{f(x)\}\cup (b_n,b_\infty)$ is connected, which is not true. Consequently, $I_n$ is a neighborhood of $x$ in $X$ and the continuity of $f{\restriction}I_n$ implies the continuity of $f$ at $x$.
\smallskip

3. By analogy we can consider the case $\inf f(X)\notin f(X)$ and $\sup f(X)\in f(X)$.
\smallskip

4. Finally, we consider the case $\inf f(X)\notin f(X)$ and $\sup f(X)\notin f(X)$. In this case we can choose two sequences of points $(a_n)_{n\in\w}$ and $(b_n)_{n\in\w}$ in $X$ such that $a_0<b_0$, the sequence $(f(a_n))_{n\in\w}$ decreases to $a_\infty:=\inf f(X)$ and $(f(b_n))_{n\in\w}$ increases to $b_\infty:=\sup f(X)$. For every $n\in\w$ choose an injective continuous map $\gamma_n:[0,1]\to X$ such that $\gamma_n(0)=a_n$ and $\gamma_n(1)=b_n$. By Lemma~\ref{l4}, for the interval $I_n:=\gamma([0,1])$ the restriction $f{\restriction}I_n$ is a topological embedding. Then $f(I_n)=[a_n,b_n]$.  The continuity of the restrictions $f^{-1}{\restriction}[f(a_n),f(b_n)]$ implies the continuity of the map $f^{-1}:f(X)\to X$. To see that $f$ is continuous, take any point $x\in X$ and find $n\in\w$ such that $f(a_n)<x<f(b_n)$. The continuity of the map $f^{-1}$ implies that the sets $f^{-1}((b_n,b_\infty))$ and $f^{-1}((a_\infty,a_n))$ are path-connected. Assuming that one of these sets contains $x$ in its closure, we would conclude that one of the sets $(a_\infty,a_n)\cup\{f(x)\}$ or $\{f(x)\}\cup (b_n,b_\w)$ is connected, which is not true. Consequently, $I_n$ is a neighborhood of $x$ in $X$ and the continuity of $f{\restriction}I_n$ implies the continuity of $f$ at $x$.
\end{proof}

\begin{lemma}\label{l6} Assume that $f:X\to\IR$ is a Darboux injection of a compact connected space $X$ to the circle $\bar\IR$. Then for any point $x\in X$ the space $X\setminus\{x\}$ has at most two connected components. Moreover, for any connected component $C$ of $X\setminus\{x\}$ we have $x\in\overline{C}$ and $f(x)\in\overline{f(C)}$.
\end{lemma}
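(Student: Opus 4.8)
The plan is to extract both conclusions from the Boundary Bumping Theorem (Lemma~\ref{l1}) combined with the elementary fact that the connected subsets of $\IR$ are exactly the intervals. If $X=\{x\}$ there is nothing to prove, so assume $U:=X\setminus\{x\}$ is non-empty; then $U$ is open in $X$ and, since $X$ is connected, a proper open subset. Fix a connected component $C$ of $U$. By Lemma~\ref{l1} its closure meets $X\setminus U=\{x\}$, so $x\in\overline{C}$; since $C\subset C\cup\{x\}\subset\overline{C}$, the set $C\cup\{x\}$ is connected, hence so is its image $f(C\cup\{x\})=f(C)\cup\{f(x)\}$, which is therefore an interval $J\subset\IR$ (non-degenerate, as $f(C)\ne\emptyset$). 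As $f$ is injective, $f(x)\notin f(C)$, so $f(C)=J\setminus\{f(x)\}$; being connected, it cannot arise from $J$ by deleting an interior point, so $f(x)$ is an endpoint of $J$. This already yields $f(x)\in\overline{f(C)}$, and in fact shows that $f(C)$ lies in one of the rays $(-\infty,f(x))$, $(f(x),+\infty)$, with $f(x)=\sup f(C)$, respectively $f(x)=\inf f(C)$. This establishes the ``moreover'' part for every component $C$.

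For the bound on the number of components, the previous paragraph assigns to each component $C$ of $U$ a side of $f(x)$ on which $f(C)$ lies, and I will show that two distinct components cannot land on the same side. Suppose $C_1\ne C_2$ are components with, say, $f(C_i)\subset(f(x),+\infty)$ and $\inf f(C_i)=f(x)$ for $i=1,2$. From $C_1\cap C_2=\emptyset$ and injectivity of $f$ we get $f(C_1)\cap f(C_2)=\emptyset$; choose $t_i\in f(C_i)$, necessarily $t_1\ne t_2$, say $t_1<t_2$. Since $f(C_2)$ is an interval containing $t_2$ with infimum $f(x)<t_1$, it contains $(f(x),t_2]$, in particular $t_1$ — contradicting $t_1\in f(C_1)$ and $f(C_1)\cap f(C_2)=\emptyset$. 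Hence each of the two sides of $f(x)$ carries at most one component, so $X\setminus\{x\}$ has at most two connected components.

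The argument is essentially soft: Lemma~\ref{l1} is precisely what upgrades a component $C$ to the connected set $C\cup\{x\}$ on which the Darboux hypothesis can be applied, and everything afterwards is order arithmetic in $\IR$. The only point requiring a little care is the endpoint dichotomy — that deleting a point from an interval leaves it connected exactly when that point is an endpoint — together with the observation that this survives the case of unbounded $f(C)$, since a finite endpoint of $J$ still belongs to $\overline{f(C)}$. I do not anticipate any genuine obstacle beyond this bookkeeping.
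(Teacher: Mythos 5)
Your first step is exactly the paper's: boundary bumping (Lemma~\ref{l1}) gives $x\in\overline{C}$, hence $C\cup\{x\}$ is connected and the Darboux property can be applied to it. The gap is that everything after that treats the codomain as the real line $\IR$, whereas the lemma (despite the typo $f:X\to\IR$ in its header) is about the circle $\bar\IR=\IR\cup\{\infty\}$, and it is the circle version that is needed later in the proof of Lemma~\ref{l7}. On the circle the two global order-theoretic facts you rely on fail as written: $\bar\IR\setminus\{f(x)\}$ is connected, so there are no two disjoint rays $(-\infty,f(x))$ and $(f(x),+\infty)$ for $f(C)$ to ``lie in'' --- for instance $f(C)$ can be an arc passing through $\infty$, which meets both of your rays while still accumulating at $f(x)$ from only one side; and the dichotomy ``deleting a non-endpoint disconnects $J$'' has the extra case $J=f(C)\cup\{f(x)\}=\bar\IR$, since removing one point from the whole circle does not disconnect it. So the $\sup/\inf$ bookkeeping does not literally apply to the statement being proved.

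The repair is routine and lands on the paper's own argument. The conclusion $f(x)\in\overline{f(C)}$ needs no order theory at all: $f(C)$ and $f(C)\cup\{f(x)\}$ are both connected and $f(x)\notin f(C)$ by injectivity, so $f(x)\in\overline{f(C)}$ (otherwise $\{f(x)\}$ would be clopen in $f(C)\cup\{f(x)\}$); this works for any codomain. For the bound on the number of components, classify each $f(C_i)$ not by which ray it lies in but by the side of $f(x)$ from which it accumulates: identify $\bar\IR\setminus\{f(x)\}$ with $\IR$ by a homeomorphism sending $f(x)$ to the two ends, so each $f(C_i)$ becomes an interval that is unbounded above or below, and two disjoint intervals cannot be unbounded on the same side --- your own intersection argument, transplanted to this chart, finishes the count. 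In short, the idea is the right one, but the order arithmetic must be localized at $f(x)$ rather than carried out globally in $\IR$.
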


\begin{proof} Let $C$ be a connected component of $X\setminus\{x\}$.  By Lemma~\ref{l1}, $x\in\overline C$, which implies that the set $C\cup\{x\}$ is connected. Since the map $f$ is Darboux, the sets $f(C)$ and $f(C)\cup f(x)$ are connected, which implies that $f(x)\in\overline{f(C)}$.

Assuming that $X\setminus\{x\}$ contains three pairwise disjoint connected components $C_1,C_2,C_3$, we conclude that $f(C_1)$, $f(C_2)$, $f(C_3)$ are pairwise disjoint connected sets in the circle $\bar\IR$ such that $f(x_i)\in\bigcap_{i=1}^3\overline{f(C_i)}$. But the circle cannot contain such three pairwise disjoint connected sets.
\end{proof}

\begin{lemma}\label{l7} If a connected compact Hausdorff space $X$ admits a Darboux injection $f:X\to \bar\IR$ to the circle, then $X$ is locally connected.
\end{lemma}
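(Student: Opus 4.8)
The plan is to show that $X$ is locally connected by verifying that every point $x\in X$ has arbitrarily small connected neighborhoods, exploiting the fact that $f$ maps $X$ onto a subset of the circle $\bar\IR$ and that the local structure of the circle is well understood. First I would fix a point $x\in X$ and analyse the set $X\setminus\{x\}$ using Lemma~\ref{l6}: it has at most two connected components, say $C$ and (possibly) $C'$, each accumulating at $x$, and their $f$-images are connected subsets of $\bar\IR$ (hence sub-arcs, or all of $\bar\IR$, or a point) with $f(x)$ in their closures. Thus $f(C)$ and $f(C')$ are ``arcs'' on the circle emanating from $f(x)$ on the two sides, and by injectivity they are disjoint from $f(x)$ and from each other; their union together with $\{f(x)\}$ is $f(X)$.

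The key step is to transfer a basic neighbourhood of $f(x)$ in the circle back through $f^{-1}$. Choose a small arc $J\subset\bar\IR$ containing $f(x)$ in its interior; then $J\cap f(X)$ is the union of $\{f(x)\}$ with an initial segment of the arc $f(C)$ and an initial segment of the arc $f(C')$. I would argue that $f^{-1}(J\cap f(X))$ is connected: writing $J\cap f(C)$ as an increasing union of closed sub-intervals $[f(x),f(b_n)]$ with $f(b_n)\to$ the endpoint, Lemma~\ref{l4} (applied along suitable connected subsets of $C\cup\{x\}$, whose existence uses that $C\cup\{x\}$ is itself compact connected and we may restrict $f$ to it) shows that each $f^{-1}([f(x),f(b_n)])$ is an arc containing $x$, so their union is connected, and similarly on the $C'$-side; gluing at $x$ gives a connected set. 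Shrinking $J$ gives arbitrarily small such sets. The remaining point is to promote ``connected'' to ``connected neighbourhood'': here I would invoke the Boundary Bumping Theorem (Lemma~\ref{l1}) in the contrapositive spirit used in case~2 of Lemma~\ref{l5} — namely, the complementary set $f^{-1}(f(X)\setminus J)$, if it accumulated at $x$, would force $\{f(x)\}\cup(f(X)\setminus J)$ to be connected, which is false since $f(x)$ lies in the interior of $J$; hence $f^{-1}(J\cap f(X))$ is a neighbourhood of $x$.

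Running this for a neighbourhood base of arcs $J$ at $f(x)$ produces a neighbourhood base of connected sets at $x$, so $X$ is locally connected at $x$; as $x$ was arbitrary, $X$ is locally connected. The main obstacle I anticipate is the bookkeeping in the middle step: one must be careful that the ``arcs'' $f(C)$, $f(C')$ really are homeomorphic images of intervals rather than pathological connected subsets of the circle, and that Lemma~\ref{l4} can legitimately be invoked — this requires first knowing that $f$ restricted to the closure $C\cup\{x\}$ of a component is well-behaved enough to pick out genuine subintervals, which in turn leans on the earlier lemmas about Darboux injections of path-connected and interval-like spaces into $\IR$. Handling the degenerate cases (a component with one-point image, or $f(X)$ equal to the whole circle minus a point) should be routine but needs to be mentioned.
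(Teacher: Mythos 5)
There is a genuine gap, and it sits exactly where the real difficulty of the lemma lies. Your argument invokes Lemma~\ref{l4} ``along suitable connected subsets of $C\cup\{x\}$'' and treats $f^{-1}([f(x),f(b_n)])$ and $f^{-1}(J\cap f(X))$ as arcs, or at least as connected sets. Neither step is available at this stage. First, Lemma~\ref{l4} (and Lemma~\ref{l5}) require the domain to be an arc (resp.\ path-connected), and before local connectedness is known a continuum need not contain any arc at all (the pseudo-arc) and need not be path-connected (the closed topologist's sine curve); path-connectedness of $X$ is deduced in the paper only \emph{after} Lemma~\ref{l7}, via the Hahn--Mazurkiewicz theorem in Lemma~\ref{l8}, so using it here is circular. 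Second, the Darboux property controls images, not preimages: $f^{-1}$ of a connected subset of the circle can be badly disconnected, so the claim that $f^{-1}(J\cap f(X))$ is connected, and the contrapositive step ``if $f^{-1}(f(X)\setminus J)$ accumulated at $x$ then $\{f(x)\}\cup(f(X)\setminus J)$ would be connected,'' both fail: to contradict the Darboux property you need a \emph{connected} subset of $X$ with disconnected image, and you have not produced one. (In case 2 of Lemma~\ref{l5} the analogous step is legitimate because the continuity of $f^{-1}$ has already been established there, so preimages of connected sets are connected; here it has not.)

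The scenario your proof must rule out, and does not, is that of a continuum such as the $\sin(1/x)$-continuum: a point $x$ near which every small closed neighborhood breaks into infinitely many components. The paper's proof attacks exactly this configuration: it considers the components of $\bar O_x$ meeting a compact neighborhood $K_x$ of $x$, notes by boundary bumping (Lemma~\ref{l1}) that each of them reaches $X\setminus O_x$, and shows that if there were infinitely many they would accumulate, in the Vietoris hyperspace, on a nondegenerate continuum $C_\infty$ whose image is a nondegenerate interval in the circle missing $f(x_0)$; choosing $c\in C_\infty$ with $f(c)$ interior to $f(C_\infty)$ and using Lemma~\ref{l6} to split $X'\setminus\{c\}$ into exactly two pieces then forces a single component $C$ close to $C_\infty$ to meet both pieces while avoiding $c$, contradicting its connectedness. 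Some argument of this compactness/accumulation type is indispensable; the neighborhood-transfer scheme you propose presupposes the local structure it is meant to establish.
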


\begin{proof} Without loss of generality, we can assume that the space $X$ contains more than one point.

Given any point $x\in X$ and open neighborhood $O_x\subset X$of $x$, it suffices to find a closed connected neighborhood $C_x\subset \bar O_x$ of $x$. Replacing $O_x$ by a smaller neighborhood, we can assume that its closure $\bar O_x$ does not contain some point $x_0\in X$.  

Fix any compact neighborhood $K_x\subset O_x$ of $x$. Let $\mathcal C$ be the family of connected components of $\bar O_x$ that intersect the compact set $K_x$. By Lemma~\ref{l1}, each set $C\in\C$ intersects the set $X\setminus O_x$. 

We claim that the family $\C$ is finite. To derive a contradiction, assume that 
$\C$ is infinite. By Lemma~\ref{l6}, the space $X\setminus\{x_0\}\supset\bar O_x$ has at most two connected components. Replacing $\C$ by a smaller infinite subfamily, we can assume that each set $C\in\C$ belongs to the same connected component $X'$ of the space $X\setminus\{x_0\}$. Since $X'$ is closed in $X\setminus\{x_0\}$ and $x_0\notin \bar O_x$, the space $X'\cap \bar O_x$ is compact.

Consider the hyperspace $\mathcal K(X'\cap \bar O_x)$ of non-empty closed subsets of $\bar O_x$, endowed with the Vietoris topology. It is well-known \cite[3.12.27]{Eng} that the space $\mathcal K(X'\cap\bar O_x)$ is compact and Hausdorff. Then the infinite set $\C$ has an accumulation point $C_\infty$ in the compact Hausdorff space $\mathcal K(X'\cap\bar O_x)$. Since each set $C\in\C$ is connected and meets the sets $K_x$ and $X\setminus O_x$, so does the set $C_\infty$. 
This implies that $C_\infty$ is a connected set containing more than two points. Since the injective map $f:X\to\bar\IR$ is Darboux, the image $f(C_\infty)$ is a connected subset of $\bar\IR\setminus\{f(x_0)\}$, containing more than one point. Then there exists a  point $c\in C_\infty$ such that the set $f(C_\infty)$ is a neighborhood of the point $f(c)$ in $\bar\IR$.

Taking into account that $f(C_\infty)\subset f(X')\subset \bar\IR\setminus\{f(x_0)\}$ is a neighborhood of $f(c)$, we  conclude that the space $f(X')\setminus \{f(c)\}$ is disconnected and so is the space $X'\setminus\{c\}$ (as $f$ is Darboux). By Lemma~\ref{l6}, the space $X'\setminus \{c\}$ has exactly two connected components $U,V$ such that $f(c)\in\overline{f(U)}\cap \overline{f(V)}$. The set $f(C_\infty)$, being a neighborhood of $f(c)$, intersects both sets $f(U)$ and $f(V)$. So, we can choose two points $u\in U\cap C_\infty$ and $v\in V\cap C_\infty$. Since $C_\infty$ is an accumulation point of the family $\C$, there exists a connected set $C\in\C$ such that $C\cap U\ne\emptyset$ and $C\cap V\ne\emptyset$. Since the point $c$ belongs to at most one set of the disjoint family $\C$, we can additionally assume that $c\notin C$. Then $C=(C\cap U)\cup (C\cap V)$ is a union of two disjoint non-empty open subsets $C\cap U$ and $C\cap V$ of $C$, which is forbidden by the connectedness of $C$.

This contradiction shows that the family $\C$ of connected components of $\bar O_x$ is finite. 
 Then $W_x=\bigcup \{C\in\C:x\in C\}$ is a connected neighborhood of $x$, contained in $\bar O_x$ and containing the neighborhood $K_x\setminus\bigcup\{C\in\C:x\notin\C\}$ of $x$. 
\end{proof}

\begin{lemma}\label{l8} Any Darboux injection $f:X\to \bar\IR$ from a connected compact metrizable space $X$ is a topological embedding.
\end{lemma}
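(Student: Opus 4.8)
The plan is to distinguish two cases according to whether the Darboux injection $f$ is onto the circle $\bar\IR$. First I would invoke Lemma~\ref{l7}: since $X$ is a compact connected metrizable space admitting a Darboux injection into $\bar\IR$, it is locally connected, and being a locally connected metric continuum it is path-connected and locally path-connected. The degenerate case $|X|\le 1$ is trivial, so I assume $X$ has more than one point.

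If $f(X)\ne\bar\IR$, I would pick a point $p\in\bar\IR\setminus f(X)$. Since $\bar\IR\setminus\{p\}$ is homeomorphic to $\IR$, the map $f$ may be regarded as a Darboux injection of the path-connected Hausdorff space $X$ into the real line, and Lemma~\ref{l5} immediately gives that $f$ is a topological embedding.

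The interesting case is $f(X)=\bar\IR$, i.e.\ $f$ is a bijection onto the circle; here the idea is to delete a single well-chosen point. If there is $x_0\in X$ with $X\setminus\{x_0\}$ connected --- and by a classical fact of continuum theory every nondegenerate continuum has such (non-cut) points, see \cite{Nad} --- then $U:=X\setminus\{x_0\}$ is an open connected subset of the locally path-connected space $X$, hence path-connected, and $f{\restriction}U\colon U\to\bar\IR\setminus\{f(x_0)\}\cong\IR$ is a Darboux injection to which Lemma~\ref{l5} applies; being moreover surjective, $f{\restriction}U$ is a homeomorphism onto $\bar\IR\setminus\{f(x_0)\}$. (Should one prefer to avoid the non-cut point theorem: if every point separates $X$, then by Lemma~\ref{l6} the complement of any $x_0$ consists of exactly two components $C_1,C_2$, which are open by local connectedness, so $X=(X\setminus C_1)\cup(X\setminus C_2)$ is a union of two proper closed subcontinua to each of which $f$ restricts non-surjectively, hence --- by the previous paragraph applied to these subcontinua --- as a topological embedding; the pasting lemma then yields continuity of $f$ directly.)

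It remains, in the non-cut-point subcase, to verify continuity of $f$ at $x_0$; then $f$ is a continuous bijection from the compact space $X$ onto the Hausdorff space $\bar\IR$, hence a homeomorphism. For this I would use compactness of $X$: if $x_n\to x_0$ while $f(x_n)\not\to f(x_0)$, then after passing to a subsequence $f(x_n)\to y$ for some $y\ne f(x_0)$; writing $y=f(z)$ with $z\in U$ (by surjectivity) and applying the continuous inverse of the homeomorphism $f{\restriction}U$, we obtain $x_n\to z$, which together with $x_n\to x_0$ contradicts the Hausdorffness of $X$. I expect the main obstacle to be exactly this surjective case --- controlling $f$ near the removed point $x_0$, and arranging that a point with connected complement can be chosen (via the non-cut point input, or its substitute through local connectedness and Lemma~\ref{l6}).
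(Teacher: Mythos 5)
Your proof is correct, and it follows the paper's skeleton exactly up to the surjective case: both arguments run Lemma~\ref{l7} $\to$ Hahn--Mazurkiewicz $\to$ Lemma~\ref{l5}, and the non-surjective case is handled identically. Where you genuinely diverge is in reducing the case $f(X)=\bar\IR$ to Lemma~\ref{l5}. The paper localizes: it uses local path-connectedness to give every point $x$ a compact path-connected neighborhood $K_x\ne X$, applies Lemma~\ref{l5} to each restriction $f{\restriction}K_x$, and reads off continuity pointwise. You instead globalize: you delete a single non-cut point $x_0$, apply Lemma~\ref{l5} once to the open, connected, hence path-connected complement $U=X\setminus\{x_0\}$ to get a homeomorphism onto $\bar\IR\setminus\{f(x_0)\}$, and then recover continuity at $x_0$ by a uniqueness-of-limits argument using compactness of $\bar\IR$ and the continuity of $(f{\restriction}U)^{-1}$. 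The trade-off is in the auxiliary input: your main line imports the classical non-cut point existence theorem for nondegenerate metric continua (and your fallback via Lemma~\ref{l6} and the pasting lemma is a clean way to avoid it), whereas the paper needs only that a Peano continuum admits proper compact path-connected neighborhoods at each point. Your version has the small advantage of invoking Lemma~\ref{l5} only once (or twice, in the fallback) rather than at every point, and of making completely explicit why the single remaining point causes no trouble; the paper's version avoids any appeal to cut-point theory. Both are complete proofs.
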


\begin{proof} By Lemma~\ref{l7}, the space $X$ is locally connected and by the Hahn-Mazurkiewicz Theorem \cite[8.14]{Nad}, $X$ is locally path-connected and path-connected. If $f(X)\ne\bar\IR$, then $f$ is a topological embedding by Lemma~\ref{l5}. It remains to consider the case $f(X)=\bar\IR$. Since $X$ is locally path-connected, each point $x\in X$ has a compact path-connected neighborhood $K_x\subset X$, which is not equal to $X$. Then $f(K_x)\ne\bar\IR$ and we can apply Lemma~\ref{l5} to conclude that the restriction $f{\restriction}K_x$ is a topological embedding. In particular, $f$ is continuous at the point $x$. So the map $f:X\to\bar\IR$ is continuous and bijective. By the compactness of $X$, the map $f$ is a homeomorphism.
\end{proof}

Since each connected $1$-manifold embeds into the circle, Lemma~\ref{l8} implies the following corollary that proves Theorem~\ref{t:main}(1).

\begin{corollary}\label{c:main1} Any Darboux injection $f:X\to Y$ from a connected compact metrizable space to any $1$-manifold $Y$ is a topological embedding.
\end{corollary}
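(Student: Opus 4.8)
The plan is to deduce Corollary~\ref{c:main1} from Lemma~\ref{l8} by replacing the target $1$-manifold $Y$ with the circle $\bar\IR$.

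Since $X$ is connected and $f$ is Darboux, the image $f(X)$ is connected, and therefore lies in a single connected component $Y'$ of $Y$; this $Y'$ is itself a connected $1$-manifold. I would next recall the classification of connected $1$-manifolds: because the paper's notion of $1$-manifold permits boundary, the possibilities up to homeomorphism are the circle $S^1$, the line $\IR$, the half-line $[0,\infty)$, and the interval $[0,1]$. Each of these is homeomorphic to a subspace of the circle $\bar\IR=S^1$, so there is a topological embedding $e\colon Y'\to\bar\IR$.

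Now $g:=e\circ f\colon X\to\bar\IR$ is a Darboux injection: it is injective as a composite of injections, and for any connected $C\subset X$ the set $f(C)$ is connected by the Darboux property of $f$, hence so is $g(C)=e(f(C))$ by continuity of $e$. As $X$ is a connected compact metrizable space, Lemma~\ref{l8} shows that $g$ is a topological embedding. Let $h\colon e(Y')\to Y'$ be the inverse of the homeomorphism $Y'\to e(Y')$ induced by $e$; then $f=h\circ g$ is continuous, being a composite of continuous maps. Finally, a continuous injection from a compact space into a metrizable (hence Hausdorff) space is automatically a topological embedding, so $f$ is an embedding.

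The only step carrying content beyond Lemma~\ref{l8} is the classification of connected $1$-manifolds together with their embeddability into $S^1$; the one thing to be careful about is not to forget the cases with boundary, $[0,\infty)$ and $[0,1]$, but since all four cases embed into $\bar\IR$ this is routine rather than a genuine obstacle. The degenerate cases where $X$ is empty or a singleton are handled trivially.
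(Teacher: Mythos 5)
Your proposal is correct and follows exactly the route the paper takes: the paper derives Corollary~\ref{c:main1} from Lemma~\ref{l8} in one line by observing that each connected $1$-manifold embeds into the circle, and your argument simply fills in the details of that observation (reduction to a connected component, classification of connected $1$-manifolds with boundary, transfer of the Darboux property along the embedding, and the compact-to-Hausdorff embedding criterion). No gaps.
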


\section{Darboux injections into $n$-varieties}\label{s:nvar}

In fact, $n$-manifolds $Y$ in Theorem~\ref{t:main} can be replaced by their generalizations, called $n$-varieties. The definition of an $n$-variety is inductive, which will allow us to use Theorem~\ref{t:main}($n$) in the proof of Theorem~\ref{t:main}($n+1$) (for $n\in\{1,2\}$).

First we recall some notions related to separators.

\begin{definition} We say that a subset $S$ of a topological space $X$ {\em separates} a set $A\subset X$ if the complement $A\setminus S$ is disconnected. In this case $S$ is called a {\em separator} of $A$. A set $S$ is called a {\em separator} of $X$ {\em between points} $a,b\in X$ if these points belong to different connected components of $X\setminus S$.
\end{definition}

Next, we introduce a new notion of a componnectness, which is a common generalization of the notions of the compactness and the semilocally-connectedness.

\begin{definition} A topological space $X$ is called {\em componnected} if $X$ has a base $\mathcal B$ of the topology such that for any set $B\in\mathcal B$ the  complement $X\setminus B$ can be written as finite union $C_1\cup\dots\cup C_n$ of compact or connected subsets of $X$.
\end{definition}

It is clear that a topological space is componnected if it is compact or semilocally-connected. 

Now we are able to introduce the notion of an $n$-variety.

\begin{definition}\label{d:var}
A Hausdorff topological space $X$ is defined to be 
\begin{enumerate}
\item a {\em $1$-variety} if each point $x\in X$ has a neighborhood homeomorphic to $\IR$;
\item[$(n)$] an {\em $(n+1)$-variety} for $n\in\IN$ if 
\begin{itemize}
\item $Y$ is first-countable;
\item each connected component of $Y$ is componnected;
\item for any connected set $C\subset Y$ that contains more than one point and any sequence $\{y_n\}_{n\in\w}\subset Y$ that converges to a point $y\in \bar C$ there exists a compact connected $n$-variety $S\subset Y$ that separates the set $C$ and contains infinitely many points $y_n$, $n\in\w$.
\end{itemize}
\end{enumerate}
\end{definition}

\begin{remark} Each compact connected $1$-variety is homeomorphic to the circle, see, e.g., \cite{Gale}.
\end{remark} 

\begin{remark} By induction it can be proved that each $n$-manifold of dimension $n\ge 2$ is an $n$-variety. On the other hand, the Sierpi\'nski carpet also is a $2$-variety but is very far from being a $2$-manifold.
\end{remark}

In the proofs of Theorems~\ref{t:main2} and \ref{t:main3} we shall exploit the following lemma.

\begin{lemma}\label{l:main} Let $X$ be a Darboux function from a regular topological space $X$ to a topological space $Y$. If a subset $S\subset Y$ separates the image $f(X)$, then the preimage $f^{-1}(S)$ contains a closed nowhere dense separator of $X$.
\end{lemma}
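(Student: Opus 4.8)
The plan is to reformulate the conclusion as a purely topological statement about the partition of $X$ induced by $f$, to extract a ``no straddling'' separation from the Darboux hypothesis, and then to build the separator by splitting on whether $f^{-1}(S)$ has non-empty interior. Since $S$ separates $f(X)$, fix a partition $f(X)\setminus S=A\sqcup B$ into two non-empty sets clopen in $f(X)\setminus S$ and put $U:=f^{-1}(A)$, $V:=f^{-1}(B)$, $P:=f^{-1}(S)$, so that $X=U\sqcup V\sqcup P$ with $U,V\ne\emptyset$. If $X$ is disconnected then $\emptyset\subseteq P$ is already a closed nowhere dense separator of $X$, so we may assume $X$ connected. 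The Darboux property is used only to show that no connected subset $C\subseteq X$ meeting both $U$ and $V$ is disjoint from $P$: such a $C$ would have $f(C)$ connected and contained in $f(X)\setminus S=A\sqcup B$, hence $C\subseteq U$ or $C\subseteq V$. Consequently every connected component $K$ of $X\setminus P$ lies wholly in $U$ or wholly in $V$, and, since components of $X\setminus P$ are closed in $X\setminus P$, the frontier $\overline{K}\setminus K$ (closure taken in $X$) is contained in $P$ — because $\overline{K}\cap(X\setminus P)$ is the closure of $K$ inside $X\setminus P$, which is $K$ itself.

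If $\operatorname{int}_X(P)\ne\emptyset$, the separator is produced at once using only regularity: pick $w\in\operatorname{int}_X(P)$ and an open set $O$ with $w\in O\subseteq\overline{O}\subseteq\operatorname{int}_X(P)$; then $N:=\overline{O}\setminus O$ is closed and nowhere dense, $N\subseteq\operatorname{int}_X(P)\subseteq f^{-1}(S)$, and $X\setminus N=O\sqcup(X\setminus\overline{O})$ is a disjoint union of the non-empty open set $O$ and the open set $X\setminus\overline{O}\supseteq U\cup V\ne\emptyset$, so $N$ separates $X$.

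The remaining case $\operatorname{int}_X(P)=\emptyset$ is the substantial one. Now $\overline{P}$ is closed and nowhere dense, so every closed subset of $\overline{P}$ is automatically nowhere dense, and it suffices to find a closed subset of $P$ whose complement in $X$ is disconnected. If $P$ happens to be closed we take $P$ itself, since $X\setminus P=U\sqcup V$ is disconnected. In general one picks a connected component $K$ of $X\setminus P$ contained in $U$ and takes $N$ to be the frontier of a suitable open hull of $K$ (for instance the boundary of $\operatorname{int}_X(\overline{K})$); one then has to verify, using the frontier fact above together with $\operatorname{int}_X(P)=\emptyset$, that $N\subseteq P$, that $N$ is closed and nowhere dense, and that $X\setminus N$ disconnects this hull from $V$ (which is disjoint from $\overline{K}$, as $\overline{K}\cap(X\setminus P)=K\subseteq U$).

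I expect this last case to be the main obstacle. The naive choices of separator — $\overline{U}\cap\overline{V}$, or the frontier of a $U$-side component — can fail to be contained in $P$, since a point of $U$ that is simultaneously a limit point of $V$ and a limit point of $P$ may lie on such a frontier. Circumventing this — e.g. by first passing to the closed nowhere dense separator $\overline{P}$ and then trimming it down into $f^{-1}(S)$, or by choosing a suitably minimal closed separator inside $\overline{P}$ — is where the real work of the proof will be concentrated; everything else is routine point-set topology.
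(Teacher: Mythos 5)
Your first case (where $f^{-1}(S)$ has non-empty interior) coincides with the paper's argument and is fine. But the remaining case is a genuine gap, not merely a flagged difficulty: when $P:=f^{-1}(S)$ has empty interior and is not closed, you never actually produce the separator, and the route you sketch (frontier of an open hull of a $U$-side component) fails for exactly the reason you yourself anticipate — such a frontier need not lie in $P$. The root of the problem is your choice of $U=f^{-1}(A)$ and $V=f^{-1}(B)$: since $f$ is not continuous, these fibers are not open, nor even relatively open in $X\setminus P$, so the partition $X\setminus P=U\sqcup V$ carries no topological content beyond your ``no straddling'' observation, and no boundary-type construction based on it is forced to land inside $P$. (For the same reason, your side remark that one may take $P$ itself when $P$ is closed tacitly assumes that $U\sqcup V$ is a topological separation, which your $U,V$ do not provide.)

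The paper's resolution is short, and you were one step away from it. The Darboux property gives directly that the \emph{subspace} $X\setminus P$ is disconnected: its image under $f$ is $f(X)\setminus S$, which is disconnected, while a Darboux map sends connected sets to connected sets. So take open sets $U,V$ of $X$ witnessing this separation of the subspace: $X\setminus P\subset U\cup V$ with $U\setminus P$ and $V\setminus P$ disjoint and non-empty. In your hard case $X\setminus P$ is dense in $X$, and $U\cap V$ is an open set disjoint from it, since $U\cap V\cap(X\setminus P)=(U\setminus P)\cap(V\setminus P)=\emptyset$; hence $U\cap V=\emptyset$. Then $L:=X\setminus(U\cup V)$ is closed, contained in $P$ because $X\setminus P\subset U\cup V$, nowhere dense because its complement contains the dense set $X\setminus P$, and it separates $X$ since $X\setminus L=U\sqcup V$ with both pieces open and non-empty. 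Note also that regularity is used only in your first case; the hard case needs none.
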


\begin{proof} Since $f$ is Darboux and the space $f(X)\setminus S$ is disconnected, the subset $X\setminus f^{-1}(S)$ is disconnected, too.
So, there exist open subsets $U,V$ in $X$ such that $X\setminus f^{-1}(S)\subset U\cap V$ and the sets $U\setminus f^{-1}(S)$ and $V\setminus f^{-1}(S)$ are disjoint and non-empty. If the set $X\setminus f^{-1}(S)$ is dense in $X$, then $$\emptyset =(U\setminus f^{-1}(S))\cap(V\setminus f^{-1}(S))=U\cap V\cap(X\setminus f^{-1}(S))$$ implies $U\cap V=\emptyset$. Then the complement $L:=X\setminus(U\cup V)\subset f^{-1}(S)$ is a nowhere dense closed separator of $X$.

If the set $X\setminus f^{-1}(S)$ is not dense in $X$, then  $f^{-1}(S)$ has non-empty interior in $X$. By the regularity of $X$, this interior contains the closure $\overline U$ of some non-empty open set $U$ in $X$. Then $f^{-1}(S)$ contains the closed nowhere dense separator $L:=\bar U\setminus U$ of $X$.
\end{proof}

\section{Fences in manifolds}

A metrizable separable  space $X$ is called  a {\em fence} if $X$ is compact and  and each connected component of $X$ is homeomorphic to the segment $[a,b]\subset\IR$ for some $a\le b$. So, a singleton and the Cantor set both are fences.
By Corollary 1.9.10 \cite{End}, each fence $F$ has dimension $\dim(F)\le 1$. 

In the following lemma we shall prove that each fence $X$ has trivial \v Cech cohomology groups $\check H^n(X;G)$ for every $n\ge 1$ and every coefficient group $G$. We shall use Huber's Theorem \cite{Huber} saying that the group $\check H^n(X;G)$ is isomorphic to the group $[X,K(G,n)]$ of homotopy classes of continuous maps from $X$ to the Eilenberg-MacLane complex $K(G,n)$. By definition, $K(G,n)$ is a CW-complex that has a unique non-trivial homotopy group $\pi_n(K(G,n))$ and this group is isomorphic to $G$. By a {\em coefficient group} we understand any non-trivial countable abelian group $G$.

\begin{lemma}\label{l:Cech} Any fence $X$ has \v Cech cohomology $$\check H^n(X;G)\approx [X,K(G,n)]=0$$for any $n\ge 1$ and any coefficient group $G$.
\end{lemma}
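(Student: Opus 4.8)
The plan is to show that every continuous map $g:X\to K(G,n)$ with $n\ge 1$ is null-homotopic, and then invoke Huber's Theorem to conclude $\check H^n(X;G)\approx [X,K(G,n)]=0$. The key observation is that a fence $X$, being a compact metrizable space of dimension $\le 1$ all of whose components are arcs or points, is close to being contractible ``component-wise'', but is globally only zero-dimensional in its quasi-component structure. So the first step is to reduce the problem to a statement about maps from the Cantor set (or from $X$ itself viewed as a limit): since each connected component $C$ of $X$ is an arc $[a,b]$ (possibly degenerate), which is contractible, the restriction $g{\restriction}C$ is null-homotopic, and a compactness/continuity argument should let us straighten $g$ on a small clopen neighborhood of each component.

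More precisely, I would argue as follows. Fix $n\ge1$ and a continuous map $g:X\to K(G,n)$. Since $K(G,n)$ is path-connected and $(n-1)$-connected, and in particular simply connected when $n\ge 2$ (and when $n=1$ we only need path-connectedness for the first reduction), every arc-component of $X$ maps into a single path-component; more importantly, for each component $C=[a,b]$ the map $g{\restriction}C\colon[a,b]\to K(G,n)$ is homotopic rel nothing to a constant map. The heart of the argument is a patching step: because $X$ is compact metrizable with $\dim X\le 1$ and its components coincide with its quasicomponents (a standard fact for compact Hausdorff spaces), $X$ admits arbitrarily fine finite partitions into clopen sets $X=X_1\sqcup\dots\sqcup X_k$; choosing the partition fine enough (using uniform continuity of $g$ and a CW/ANR neighborhood-retraction for $K(G,n)$, which is an ANR-like complex so local contractibility is available up to the needed dimension), one arranges that $g(X_i)$ lies in a contractible open subset $U_i$ of $K(G,n)$. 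Then on each clopen piece $X_i$ the map $g{\restriction}X_i$ is null-homotopic, and since the $X_i$ are clopen and pairwise disjoint, these homotopies assemble to a global homotopy from $g$ to a locally constant map. A locally constant map on $X$ lands in finitely many points of the path-connected space $K(G,n)$, hence is null-homotopic as well.

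The step I expect to be the main obstacle is justifying the fine clopen partition with each piece mapped into a contractible neighborhood. One must be careful that $K(G,n)$, while a CW-complex and hence locally contractible and an ANR, may not have a single uniform ``contractibility radius''; but since $X$ is compact and $g$ continuous, $g(X)$ is contained in a compact, hence finite, subcomplex $L\subset K(G,n)$ (up to homotopy we may even replace the target by $L$), and a finite CW-complex is a compact ANR, so there is a uniform $\delta$ such that any subset of diameter $<\delta$ lies in a contractible open set. Combined with the fact that a compact metrizable space with components equal to quasicomponents has arbitrarily fine clopen covers refining any given open cover, this yields the desired partition. The remaining details — assembling clopen-indexed homotopies into a continuous global homotopy, and the $n=1$ case where one may instead argue directly that $[X,S^1]\cong\check H^1(X;\IZ)$ vanishes because $X$ is a $1$-dimensional fence with no ``loops'' — are routine. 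Thus $[X,K(G,n)]=0$, and by Huber's Theorem $\check H^n(X;G)\approx 0$.
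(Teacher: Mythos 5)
Your overall strategy (reduce to a finite subcomplex of $K(G,n)$, exploit its ANR property, use the coincidence of components and quasicomponents to produce clopen neighborhoods, and null-homotope component by component) is in the same spirit as the paper's, but the central patching step as you state it is false. A clopen subset of $X$ is a union of connected components, so it contains every component it meets \emph{in its entirety}; hence a fence with nondegenerate arc components admits no ``arbitrarily fine'' finite clopen partitions and no clopen refinements of an arbitrary open cover (for $X=[0,1]$, itself a fence, the only nonempty clopen set is $X$). Consequently you cannot arrange $g(X_i)\subset U_i$ with $U_i$ contractible: $g(X_i)$ contains $g(C)$ for each component $C\subset X_i$, and the continuous image of an arc need not lie in any contractible open subset of the target (e.g.\ for $n=1$ and $G=\IZ$ an arc mapped onto the circle $K(\IZ,1)=S^1$). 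Uniform continuity of $g$ gives no control here because the clopen pieces are not small; the very fact that $g{\restriction}C$ is null-homotopic for reasons of the \emph{domain} (contractibility of $C$) rather than of the image is what your mechanism fails to capture.

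The paper circumvents this at exactly the point where your argument breaks. Instead of shrinking the image of a clopen piece, it fixes for each component $I$ a retraction $r_I:X\to I$ (Tietze--Urysohn) and uses the ANR property of the finite subcomplex $Y\supset f(X)$ in the form: there is an open cover $\U$ of $Y$ such that any map $\U$-close to $f$ is homotopic to $f$. For each $z\in I$ one picks a neighborhood $W_z$ with $f(W_z)$ inside one member of $\U$ and then $V_z\subset W_z$ with $r_I(V_z)\subset W_z$; the union $\bigcup_{z\in I}V_z$ contains a clopen neighborhood $V_I$ of $I$ on which $f$ and $f\circ r_I$ are $\U$-close. A finite subcover then yields a map $r$ of $X$ onto a finite disjoint union of arcs with $(f,f\circ r)\prec\U$, so $f$ is homotopic to $f\circ r$, and $f\circ r$ is null-homotopic since it factors through finitely many contractible sets and $K(G,n)$ is path-connected. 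If you replace your ``each piece maps into a contractible set'' step by this comparison of $f$ with $f\circ r$, the rest of your outline goes through.
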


\begin{proof} Since the Eilenberg-MacLane complex $K(G,n)$ is path-connected for $n\ge 1$, it suffices to prove that each continuous map $f:X\to K(G,n)$ is homotopic to a constant map.

Denote by $\I$ the family of connected components of the fence $X$. By definition, each connected component $I\in\I$ is homeomorphic to the segment $[a,b]\subset\IR$ for some $a\le b$.
By the Tietze-Urysohn Theorem \cite[2.1.8]{Eng}, there exists a retraction $r_I:X\to I$.

The subset $f(X)$ of the CW-complex $K(G,n)$ is compact and hence is contained in a finite subcomplex $Y$ of $K(G,n)$. By Proposition A.4 in \cite{Hat}, the finite  CW-complex $Y$ is locally contractible. Being a finite CW-complex, the space $Y$ is compact, metrizable, and finite-dimensional. By Theorem V.7.1 in \cite{Hu}, the finite-dimensional locally contractible metrizable space $Y$ is an absolute neighborhood retract. By Theorem IV.1.1 in \cite{Hu}, $Y$ has an open cover $\U$ of $Y$ such that any map $g:X\to Y$ with $(g,f)\prec\U$ is homotopic to $f$.
The notation $(g,f)\prec\U$ means that for every $x\in X$ the doubleton $\{g(x),f(x)\}$ is contained in some set $U\in\U$.

For every $z\in I$ choose an open neighborhood $W_z\subset X$ such that $f(W_z)\subset U$ for some $U\in\U$.
Next, use the continuity of the retraction $r_I$ and choose an open neighborhood $V_z\subset W_z$ of $z$ such that $r_I(V_z)\subset W_z$. 

By \cite[6.1.23]{Eng}, connected components of compact Hausdorff spaces coincide with quasicomponents. Consequently, the open neighborhood $\bigcup_{z\in I}V_z$ of $I$ contains a closed-and-open neighborhood $V_I\subset X$ of $I$. For every point $x\in V_I$ we can find a point $z\in I$ with $x\in V_z$ and conclude that $\{x,r_I(x)\}\subset W_z$ and hence $\{f(x),f\circ r_I(x)\}\subset U$ for some $U\in\U$. 

 By the compactness of $X$, the open cover $\{V_I:I\in\I\}$ of $X$ contains a finite subcover $\{V_{I_1},\dots,V_{I_n}\}$ of $X$. Define a map $r:X\to \bigcup_{k=1}^n I_k$ by the formula $r(x)=r_{I_k}(x)$ where $k$ is a unique number such that $x\in V_{I_k}\setminus\bigcup_{p<k}V_{I_p}$. It follows that $(f,f\circ r)\prec\U$ and hence the maps $f$ and $f\circ r$ are homotopic. Since the subset $J:=\bigcup_{k=1}^nI_k$ is a disjoint union of contractible sets and the space $K(G,n)$ is path-connected (for $n>0$), the map $f|J:J\to K(G,n)$ is homotopic to a constant map and so are the maps $f\circ r$ and $f$.
\end{proof}

Let $G$ be a coefficient group. A closed $n$-manifold $X$ is called {\em $G$-orientable} if its $n$-th cohomology group $\check H^n(X;G)$ is isomorphic to $G$.
It is known \cite[p.6]{DV} that each closed $n$-manifold is $\IZ_2$-orientable, where $\IZ_2$ stands for the two-element group $\IZ/2\IZ$. In the proof of the following proposition we shall use the Poincar\'e Duality Theorem saying that for a closed subset $B$ of a $G$-orientable closed $n$-manifold $X$ the \v Cech cohomology group $\check H^p(X,B;G)$ of the pair $(X,B)$ is naturally isomorphic to the singular homology group $H_{n-p}(X\setminus B;G)$ of the complement $X\setminus B$. This form of the Duality Theorem can be found in \cite[0.3.1]{DV}. For basic information on homologies and cohomologies, see  Hatcher's  monograph \cite{Hat}.

\begin{proposition}\label{p:multi} Let  $F$ be a fence in a $G$-orientable closed $n$-manifold $X$ for dimension $n\ge 3$. If for some $k\le n-2$ the singular homology group $H_k(X;G)$  is trivial, then the group $H_k(X\setminus F;G)$ is trivial, too.
\end{proposition}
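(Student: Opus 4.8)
The plan is to use the long exact sequence of the pair $(X, X\setminus F)$ in singular homology together with the Poincaré Duality Theorem and the vanishing of Čech cohomology of fences established in Lemma~\ref{l:Cech}. First I would write down the relevant portion of the long exact sequence of the pair $(X, X\setminus F)$ with coefficients in $G$:
\begin{equation*}
H_{k+1}(X, X\setminus F; G)\to H_k(X\setminus F; G)\to H_k(X; G).
\end{equation*}
Since $H_k(X;G)=0$ by hypothesis, exactness shows that the boundary map $H_{k+1}(X, X\setminus F; G)\to H_k(X\setminus F; G)$ is surjective, so it suffices to prove that the relative group $H_{k+1}(X, X\setminus F; G)$ vanishes. (One should be slightly careful here about whether to use singular homology of the pair directly, or whether to pass through the Čech side; since $F$ is a compact subset of a manifold, a tautness/continuity argument will be needed to identify the relevant groups.)

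The key step is to apply Poincaré Duality in the form quoted just before the proposition: for the closed subset $F\subset X$ with $X$ being $G$-orientable, $\check H^p(X, F; G)\approx H_{n-p}(X\setminus F; G)$. Taking $p = n-k-1$ gives
\begin{equation*}
H_{k+1}(X\setminus F; G)\approx \check H^{\,n-k-1}(X, F; G).
\end{equation*}
Wait — I want the group I am trying to kill, which after the exact-sequence reduction is $H_{k+1}(X,X\setminus F;G)$, but it is cleaner to directly target $H_{k+1}(X\setminus F;G)$ itself and show it is zero using the two inputs $H_{k+1}(X;G)$ and the cohomology of $F$. So the cleaner route: from the long exact cohomology sequence of the pair $(X,F)$ we get $\check H^{\,n-k-1}(X;G)\to \check H^{\,n-k-1}(F;G)\to \check H^{\,n-k}(X,F;G)\to \check H^{\,n-k}(X;G)$, and by Lemma~\ref{l:Cech} the group $\check H^{\,n-k-1}(F;G)=0$ since $n-k-1\ge 1$ (using $k\le n-2$). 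Hence $\check H^{\,n-k}(X,F;G)$ injects into $\check H^{\,n-k}(X;G)$. Now apply Poincaré Duality on the left in the form $\check H^{\,n-k}(X,F;G)\approx H_k(X\setminus F;G)$ and, for the closed manifold $X$ (taking $F=\emptyset$), $\check H^{\,n-k}(X;G)\approx H_k(X;G)=0$. The injectivity then forces $H_k(X\setminus F;G)\approx \check H^{\,n-k}(X,F;G)\hookrightarrow \check H^{\,n-k}(X;G)=0$, which is exactly the conclusion.

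I expect the main obstacle to be purely bookkeeping: making sure the Poincaré Duality isomorphism quoted in the excerpt is applied with the correct degrees and to the correct pair, and verifying that the degree $n-k-1$ in which we invoke Lemma~\ref{l:Cech} is genuinely $\ge 1$ — this is where the hypothesis $k\le n-2$ (equivalently $n-k-1\ge 1$) is used, and it is the only place it is needed. A secondary technical point is that Lemma~\ref{l:Cech} is stated for countable coefficient groups $G$ (``coefficient group''), which matches the standing hypothesis here, so no extra care is required; one only needs that $\check H^{\,n-k-1}(F;G)=0$, and since $F$ is a fence and $n-k-1\ge 1$, Lemma~\ref{l:Cech} applies verbatim. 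Finally I would remark that the naturality of the Poincaré Duality isomorphism with respect to inclusion $(X,\emptyset)\hookrightarrow (X,F)$ is what guarantees that the cohomology map $\check H^{\,n-k}(X,F;G)\to\check H^{\,n-k}(X;G)$ corresponds to the homology map $H_k(X\setminus F;G)\to H_k(X;G)$ induced by inclusion, so the argument is internally consistent and the conclusion $H_k(X\setminus F;G)=0$ follows.
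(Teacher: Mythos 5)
Your final argument is correct and is essentially the paper's own proof: both use the long exact Čech cohomology sequence of the pair $(X,F)$, the vanishing $\check H^{\,n-k-1}(F;G)=0$ from Lemma~\ref{l:Cech} (which is where $k\le n-2$ enters), and Poincaré Duality to identify $\check H^{\,n-k}(X,F;G)$ with $H_k(X\setminus F;G)$ and $\check H^{\,n-k}(X;G)$ with $H_k(X;G)=0$. The only difference is cosmetic: the paper squeezes $\check H^{\,n-k}(X,F;G)$ between two zero groups in the exact sequence, while you phrase it as an injection into a zero group; your opening detour through the pair $(X,X\setminus F)$ and the closing naturality remark are unnecessary but harmless.
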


\begin{proof} 
By our assumption, the homology group $H_k(X;G)$ is trivial. By the Poincar\'e Duality, $\check H^{n-k}(X,G)\cong H_k(X;G)=0$ and by Lemma~\ref{l:Cech}, the group $\check H^{n-k-1}(F;G)$ is trivial (as $n-k-1\ge 1$). Writing a piece of the long exact sequence of the pair $(X,F)$ for \v Cech cohomology:
$$0=\check H^{n-k-1}(F;G)\to \check H^{n-k}(X,F;G)\to \check H^{n-k}(X;G)=0,$$
we conclude that the group $\check H^{n-k}(X,F;G)$ is trivial. By the Duality Theorem 0.3.1 \cite{DV}, the \v Cech cohomology group $\check H^{n-k}(X,F;G)$ is isomorphic to the singular homology group $H_{k}(X\setminus F;G)$. So, the group $H_{k}(X\setminus F;G)$ is trivial.
\end{proof}

\begin{proposition}\label{p:2not} For any fence $F$ in a connected closed $n$-manifold $X$ of dimension $n\ge 2$, the complement $X\setminus F$ is connected.
\end{proposition}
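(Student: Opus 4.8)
The plan is to carry out the cohomological argument from the proof of Proposition~\ref{p:multi} in the one case it does not literally cover, namely $k=0$, where the hypothesis ``$H_k(X;G)$ is trivial'' is never satisfied. To sidestep any orientability assumption I would work throughout with the coefficient group $\IZ_2$: every closed $n$-manifold is $\IZ_2$-orientable \cite[p.6]{DV}, so $\check H^n(X;\IZ_2)\cong\IZ_2$, and the Poincar\'e Duality Theorem \cite[0.3.1]{DV} applies. Before starting I would record two easy facts: $F$ is closed in $X$, being compact in a Hausdorff space, and $X\setminus F\ne\emptyset$ because $\dim(F)\le 1<n=\dim(X)$.

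Since $n-1\ge 1$, Lemma~\ref{l:Cech} gives $\check H^{n-1}(F;\IZ_2)=0=\check H^{n}(F;\IZ_2)$. Feeding this into the long exact sequence of the pair $(X,F)$ in \v Cech cohomology,
$$0=\check H^{n-1}(F;\IZ_2)\to\check H^{n}(X,F;\IZ_2)\to\check H^{n}(X;\IZ_2)\to\check H^{n}(F;\IZ_2)=0,$$
I obtain $\check H^{n}(X,F;\IZ_2)\cong\check H^{n}(X;\IZ_2)\cong\IZ_2$. By the Duality Theorem \cite[0.3.1]{DV} the group $\check H^{n}(X,F;\IZ_2)$ is isomorphic to the singular homology group $H_0(X\setminus F;\IZ_2)$, hence $H_0(X\setminus F;\IZ_2)\cong\IZ_2$.

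To finish, I would use that $X\setminus F$, as an open subspace of a manifold, is locally path-connected, so its connected components coincide with its path components; singular $H_0(X\setminus F;\IZ_2)$ is then the direct sum of copies of $\IZ_2$ indexed by these components. Since this group is isomorphic to $\IZ_2$ and $X\setminus F$ is non-empty, $X\setminus F$ has exactly one component and is therefore connected. The only genuine subtlety is the $k=0$ issue itself: Proposition~\ref{p:multi} cannot be quoted verbatim, and the remedy is to replace its vanishing input by the explicit identification $\check H^n(X;\IZ_2)\cong\IZ_2$ coming from $\IZ_2$-orientability and then transport this $\IZ_2$ through the exact sequence and the duality isomorphism down to $H_0(X\setminus F;\IZ_2)$.
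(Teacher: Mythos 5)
Your proof is correct and follows essentially the same route as the paper's: $\IZ_2$-orientability, the vanishing of $\check H^{n-1}(F;\IZ_2)$ and $\check H^{n}(F;\IZ_2)$ from Lemma~\ref{l:Cech}, the long exact sequence of the pair $(X,F)$, and Poincar\'e duality to identify $\check H^{n}(X,F;\IZ_2)$ with $H_0(X\setminus F;\IZ_2)\cong\IZ_2$. Your added remarks on non-emptiness via $\dim F\le 1<n$ and on components coinciding with path components are a welcome bit of extra care that the paper leaves implicit.
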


\begin{proof} Since $X$ is connected, its singular homology group $H_0(X;\IZ_2)$ is isomorphic to $\IZ_2$. Since each $n$-manifold $X$ is $\IZ_2$-orientable (see \cite[p.6]{DV}), $\check H^n(X;\IZ_2)\cong \IZ_2$. By Lemma~\ref{l:Cech}, $\check H^{n-1}(F;\IZ_2)=\check H^n(F;\IZ_2)=0$. Writing a piece of the long exact sequence of the pair $(X,F)$ for \v Cech cohomology:
$$0=\check H^{n-1}(F;\IZ_2)\to \check H^n(X,F;\IZ_2)\to \check H^n(X;\IZ_2)\to\check H^n(F;G)=0,$$
we conclude that the group $\check H^{n}(X,F;\IZ_2)$ is isomorphic to $\check H^n(X;\IZ_2)\cong \IZ_2$. By the Poincar\'e Duality Theorem, the \v Cech cohomology group $\check H^{n}(X,F;\IZ_2)$ is isomorphic to the singular homology group $H_{0}(X\setminus F;\IZ_2)$. So, the group $H_{0}(X\setminus F;\IZ_2)$ is isomorphic to $\IZ_2$, which implies that the space $X\setminus F$ is connected and non-empty.
\end{proof}

\section{Darboux injections to 2-varieties}\label{s:main2}

In this section we prove Theorem~\ref{t:main}(2) and its more general version:

\begin{theorem}\label{t:main2} Let $X$ be a compact Hausdorff space that cannot be separated by a fence. Any  Darboux injection $f:X\to Y$ to a $2$-variety $Y$ is an open topological embedding.
\end{theorem}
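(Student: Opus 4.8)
The plan is to argue by contradiction: assuming that $f$ is not an open topological embedding, I shall exhibit a fence that separates $X$, contrary to hypothesis. A few reductions first. We may assume $X$ has more than one point. Since the empty set is a fence, the hypothesis forces $X$ to be connected, so $f(X)$ is a connected subset of $Y$ with more than one point; write $C:=f(X)$. As $X$ is compact and $Y$ is Hausdorff, an injective continuous map $X\to Y$ is automatically a topological embedding with compact (hence closed) image, so it suffices to reach a contradiction in each of the two cases: (1) $C$ is not open in $Y$; (2) $C$ is open in $Y$ but $f$ is discontinuous at some point $x_0\in X$. In both cases the aim is to produce a \emph{circle} $S\subseteq Y$ — a compact connected $1$-variety, which is homeomorphic to $S^1$ — that separates $C$ and is \emph{not} contained in $C$.

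Granting such a circle $S$, the contradiction is obtained as follows. By Lemma~\ref{l:main} the preimage $f^{-1}(S)$ contains a closed nowhere dense separator $L$ of $X$; being closed in the compact space $X$, the set $L$ is compact, and so is each of its connected components. For a component $K$ of $L$, the restriction $f{\restriction}K\colon K\to S$ is a Darboux injection of a continuum into the circle. Once one knows that $L$ (equivalently, each $K$) is metrizable, Corollary~\ref{c:main1} applies and $f{\restriction}K$ is a topological embedding, so $K$ is homeomorphic to a subcontinuum of $S$ — a point, an arc, or all of $S$ — and the last possibility is excluded because $f(K)=S$ would give $S\subseteq f(X)=C$, against the choice of $S$. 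Hence every component of $L$ is a point or an arc, so the compact metrizable space $L$ is a fence, and, since it separates $X$, we have our contradiction.

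It remains to produce the circle $S$, and here lies the real work. In Case~(1) it is easy: first countability of $Y$ supplies a point $y\in C$ that is the limit of a sequence $(y_n)_{n\in\w}$ lying in $Y\setminus C$, and applying the defining property of the $2$-variety $Y$ to the connected set $C$ and the sequence $(y_n)$ converging to $y\in\overline C$ yields a circle $S$ separating $C$ and containing infinitely many $y_n\notin C$, so $S\not\subseteq C$. Case~(2) is the hard one: the sequence must be manufactured from the oscillation of $f$ at $x_0$. Combining discontinuity at $x_0$ with a countable neighbourhood base of $f(x_0)$ in the first countable space $Y$, one extracts a point $z_0\ne f(x_0)$ lying in $\overline{f(U)}$ for every neighbourhood $U$ of $x_0$, together with a sequence $(y_n)_{n\in\w}$ in $C$ converging to $z_0$ whose $f$-preimages cluster at $x_0$; feeding $C$ and $(y_n)$ into the defining property of $Y$ yields a circle $S$ separating $C$ with $z_0\in S$ and $x_0\in\overline{f^{-1}(S)}$. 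If $z_0\notin C$ then $S\not\subseteq C$ and we conclude as before; the residual sub-case $z_0\in C$ — where a priori $S$ might lie entirely inside $C$ and $f^{-1}(S)$ might itself be a circle (which would merely re-separate $X$ rather than provide a fence) — has to be dispatched by exploiting the position of $x_0$ relative to $f^{-1}(S)$ together with $f(x_0)\ne z_0$, and this is exactly why the hypothesis speaks of \emph{fences} rather than of circles, a separating circle in $X$ being in general unavoidable. The two points I expect to carry the proof are therefore (a) this extraction and dispatch in Case~(2), delicate because $X$ need not be first countable and $f$ need not be continuous, and (b) the metrizability of the separator $L$, which should follow from the local-connectedness techniques of Lemma~\ref{l7} (or from first establishing the continuity of $f$, after which it is immediate).
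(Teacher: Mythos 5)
Your Case~(1) (the image $C=f(X)$ is not open) is essentially one half of the paper's argument and works: the circle $S$ supplied by the $2$-variety property passes through infinitely many $y_n\notin C$, so $S\not\subseteq C$, every component of the closed separator $L\subseteq f^{-1}(S)$ embeds into $S$ as a proper subcontinuum by Lemma~\ref{l8}, and $L$ is a fence separating $X$. (Calling $L$ a fence requires its metrizability; the paper's own proof is equally silent on this for non-metrizable $X$, so I do not press the point.)

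The genuine gap is Case~(2), and it is not a deferrable detail. First, the extraction of a point $z_0\ne f(x_0)$ lying in $\overline{f(U)}$ for every neighbourhood $U$ of $x_0$ is unjustified: $Y$ is only first-countable, the sets $f(U)\setminus V$ (for the neighbourhood $V$ of $f(x_0)$ witnessing discontinuity) form a filter base with no compactness available in $Y$, so no such cluster point need exist, and even if one did there is no reason its preimages should cluster at $x_0$. Second, and more fundamentally, your strategy needs a circle $S$ with $S\not\subseteq C$, and in the central instance of the theorem (a Darboux self-bijection of $S^2$, say) every circle lies in $C$, so no such $S$ exists; you flag this ``residual sub-case'' yourself but leave it undispatched. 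The paper resolves it by turning the unwanted outcome to advantage: when $S\subseteq C$ the separator $L\subseteq f^{-1}(S)$, not being a fence, has a component $K$ that is not an arc, so by Lemma~\ref{l8} $K$ embeds onto all of $S$ and $f{\restriction}f^{-1}(S):f^{-1}(S)\to S$ is a homeomorphism. This is item (4) of Lemma~\ref{l:main2}, and it is used not to produce a fence but to prove that $f^{-1}$ is sequentially continuous: the circle through infinitely many $y_n$ carries their preimages to $f^{-1}(y)$. The continuity of $f$ is then derived from the continuity of $f^{-1}$ by a separate argument exploiting the componnectedness of the component $Y'$ of $Y$ together with the Darboux property. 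Both of these mechanisms are absent from your proposal, and without them the proof does not close.
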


\begin{proof} The proof of this theorem relies on the following lemma in which the injection $f$ and the spaces $X,Y$ satisfy the assumptions of Theorem~\ref{t:main2}.

\begin{lemma}\label{l:main2} For any sequence $\{y_n\}_{n\in\w}\subset Y$ that converges to a point $y\in \overline{f(X)}$ there exists a set $S\subset Y$ such that 
\begin{enumerate}
\item $S$ is homeomorphic to the circle $\bar\IR$;
\item  $f(X)\setminus S$ is disconnected;
\item the set $\{n\in\w:y_n\in S\}$ is infinite;
\item the map $f{\restriction}f^{-1}(S):f^{-1}(S)\to S$ is a homeomorphism.
\end{enumerate}
\end{lemma}

\begin{proof} By definition of a $2$-variety, there exists a set $S\subset Y$ satisfying the conditions (1)--(3). Since $f(X)\setminus S$ is disconnected, we can apply Lemma~\ref{l:main} and conclude that the set $f^{-1}(S)$ contains a closed separator $L$ of the set $X$. By our assumption, $L$ is not a fence and hence some connected component $C$ of $L$ is not homeomorphic to a subset of $[0,1]$. It follows that the restriction $f{\restriction}C:C\to S$ is a Darboux injection to the space $S$, which is homeomorphic to $\bar\IR$. By Lemma~\ref{l8}, the map $f{\restriction}C:C\to\bar\IR$ is a topological embedding. Since $C$ is not homeomorphic to a subset of $[0,1]$, $f(C)=S$ and hence $f^{-1}(S)=C$. 
\end{proof}

Now we shall derive Theorem~\ref{t:main2} from Lemma~\ref{l:main2}.

By our assumption, the space $X$ cannot be separated by a fence, which implies that $X$ is connected (otherwise it would be separated by the empty set, which is a fence).
Since the map $f$ is Darboux, the image $f(X)$ is connected. So, $f(X)$ is contained in some connected component $Y'$ of the space $Y$. By Definition~\ref{d:var}, the connected component $Y'$ of $Y$ is componnected.

Now we prove that the image $f(X)$ is closed-and-open in $Y$. Assuming that $f(X)$ is not closed in $Y$, we can choose a sequence $\{y_n\}_{n\in\w}\subset f(X)$ that converges to some point $y\in Y\setminus f(X)$. By Lemma~\ref{l:main2}, there exists a compact subset $S\subset Y$ such that $S\subset f(X)$ and $S$ contains infinitely many points $y_n$ and hence contains $\lim_{n\to\infty}y_n=y$. But this contradicts the choice of $y\notin f(X)$.

Assuming that $f(X)$ is not open in $Y$, we can choose a sequence $\{y_n\}_{n\in\w}\subset Y\setminus f(X)$ that converges to some point $y\in f(X)$. By Lemma~\ref{l:main2}, there exists a subset $S\subset Y$ such that $y_n\in S\subset f(X)$ for infinitely many numbers $n\in\w$. But this contradicts the choice of the sequence $\{y_n\}_{n\in\w}\subset Y\setminus f(X)$.

Therefore, the connected set $f(X)\subset Y'$ is closed-and-open in $Y$ and hence coincides with $Y'$ by the connectedness of $Y'$.

Now we prove that the map $f^{-1}:Y'\to X$ is continuous. To derive a contradiction, assume that $f^{-1}$ is discontinuous at some point $y\in Y$. Since the $2$-variety is first-countable, we can find a neighborhood $U\subset X$ of $f^{-1}(y)$ and a sequence $\{y_n\}_{n\in\w}\subset Y$ such that $\lim_{n\to\infty}y_n=y$ but $f^{-1}(y_n)\notin U$ for all $n\in\w$. 

By Lemma~\ref{l:main2}, there exists a subset $S\subset Y$ such that  the set $\Omega=\{n\in\w:y_n\in S\}$ is infinite and $f^{-1}{\restriction}S:S\to f^{-1}(S)$ is a homeomorphism.  By the compactness of $S$, the limit point $y$ of the sequence $\{y_n\}_{n\in\Omega}\subset S$ belongs to $S$. Then the sequence $\{f^{-1}(y_n)\}_{n\in\Omega}\subset f^{-1}(S)$ converges to $f^{-1}(y)\in U$ and hence $f^{-1}(y_n)\in U$ for all but finitely many numbers $n\in\Omega$. But this contradicts the choice of the sequence $(y_n)_{n\in\w}$.
This contradiction completes the proof of the continuity of the map $f^{-1}:Y\to X$. 

Finally, we prove that the map $f:X\to Y'\subset Y$ is continuous. To derive
 a contradiction, assume that $f$ is discontinuous at some point $x\in X$. Then we can find a neighborhood $O_{y}\subset Y$ of $y=f(x)$ whose preimage $f^{-1}(O_y)$ is not a neighborhood of $x$. Since the set $f(X)=Y'$ is open in $Y$ and the (connected) space $Y'$ is componnected, we can replace $O_y$ by a smaller neighborhood and assume that the complement $Y'\setminus O_y$ can be written as the finite union $C_1,\dots,C_n$ of compact or connected sets. The continuity of the map $f^{-1}$ ensures that the sets $f^{-1}(C_1),\dots,f^{-1}(C_n)$ are compact or connected. Since $f^{-1}(O_y)$ is not a neighborhood of $x$,
 $$x\in\overline{X\setminus f^{-1}(O_y)}=\overline{f^{-1}(Y\setminus O_y)}=\bigcup_{i=1}^n\overline{f^{-1}(C_i)}$$and hence $x\in\overline{f^{-1}(C_i)}$ for some $i$. Observe that the set $C_i$ is not compact (otherwise, $f^{-1}(C_i)$ would be compact and $x\notin f^{-1}(C_i)=\overline{f^{-1}(C_i)}$).
 
Then $C_i$ is connected and so is its preimage $f^{-1}(C_i)$ under the continuous map $f^{-1}$. Since $x\in\overline{f^{-1}(C_i)}$, the subspace $C:=\{x\}\cup f^{-1}(C_i)$ of $X$ is connected but its image $f(C)=\{f(x)\}\cup C_i$ is not (as the singleton $\{y\}=O_y\cap C_i$ is clopen in $f(C)$). But this contradicts the Darboux property of $f$. This contradiction implies that $f:X\to Y'\subset Y$ is continuous and hence an open topological embedding.
\end{proof}

Theorem~\ref{t:main2} and Proposition~\ref{p:2not} imply

\begin{corollary}\label{c:main2} Any Darboux injection $f:X\to Y$ from a  connected closed $n$-manifold $X$ of dimension $n\ge 2$ to a $2$-variety $Y$ is an open topological embedding.
\end{corollary}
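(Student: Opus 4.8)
The plan is to obtain Corollary~\ref{c:main2} as an immediate consequence of Theorem~\ref{t:main2}: the only hypothesis on the domain that needs checking is that it cannot be separated by a fence, everything else being built into the notion of a closed manifold. So the whole task reduces to verifying that a connected closed $n$-manifold $X$ of dimension $n\ge 2$ satisfies the hypotheses of Theorem~\ref{t:main2} and then quoting that theorem.

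First I would record that $X$ is compact and Hausdorff: by definition a closed $n$-manifold is compact, and being a manifold it is metrizable, hence Hausdorff. Next — and this is the only place where content actually enters — I would show that $X$ cannot be separated by a fence. Let $F\subset X$ be any fence. Since $X$ is a connected closed $n$-manifold with $n\ge 2$, Proposition~\ref{p:2not} applies and gives that the complement $X\setminus F$ is connected; in particular $F$ is not a separator of $X$. (The degenerate case $F=\emptyset$ is covered as well, where it merely re-expresses the connectedness of $X=X\setminus\emptyset$; recall that the empty set counts as a fence.) Since $F$ was an arbitrary fence, $X$ cannot be separated by a fence. Finally, applying Theorem~\ref{t:main2} to the Darboux injection $f:X\to Y$ into the $2$-variety $Y$ yields that $f$ is an open topological embedding, which is exactly the assertion of the corollary.

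I do not expect any genuine obstacle in this step: all of the difficulty has already been absorbed into Proposition~\ref{p:2not} (which rests on $\IZ_2$-orientability of closed manifolds, the vanishing of the \v Cech cohomology of fences from Lemma~\ref{l:Cech}, and Poincar\'e duality) and into Theorem~\ref{t:main2} itself. The one point deserving a moment's care is simply matching the phrase ``cannot be separated by a fence'' with the conclusion of Proposition~\ref{p:2not}, and in particular noticing that the empty fence is allowed, so that this phrase also encodes the connectedness hypothesis on $X$.
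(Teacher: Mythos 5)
Your proposal is correct and coincides with the paper's own derivation, which obtains Corollary~\ref{c:main2} precisely by combining Proposition~\ref{p:2not} (a fence cannot separate a connected closed $n$-manifold, $n\ge 2$) with Theorem~\ref{t:main2}. Your extra remark that the empty fence encodes the connectedness hypothesis is a correct and worthwhile clarification, but the argument is the same as in the paper.
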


Since each  $2$-manifold is a $2$-variety, Corollary~\ref{c:main2} implies the following corollary that implies Theorem~\ref{t:main2}(2).

\begin{corollary} Any Darboux injection $f:X\to Y$ from a  connected closed $n$-manifold $X$ of dimension $n\ge 2$ to a $2$-manifold $Y$ is an open topological embedding.
\end{corollary}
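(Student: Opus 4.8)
The plan is to obtain this statement as an immediate consequence of Corollary~\ref{c:main2}, which already establishes the conclusion for Darboux injections from a connected closed $n$-manifold ($n\ge 2$) into an arbitrary $2$-variety. Thus the only point requiring work is the Remark following Definition~\ref{d:var}, namely that every $2$-manifold $Y$ is a $2$-variety in the sense of Definition~\ref{d:var} (clause $(n)$ with $n=1$), using that a compact connected $1$-variety is a topological circle $\bar\IR$.

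First I would dispatch the two routine clauses of Definition~\ref{d:var}. First-countability of $Y$ follows from metrizability. For the second clause I would show more, namely that each connected component $Y'$ of $Y$ is componnected: take as a base of $Y'$ the coordinate balls $B$ whose closure $\bar B$ is an embedded closed disk (or half-disk, at boundary points), with $\bar B\setminus B$ a single arc or circle lying in $Y'\setminus B$. For such $B$ the complement $Y'\setminus B$ is connected, because a nonempty clopen subset of $Y'\setminus B$ disjoint from $\bar B$ would also be clopen in $Y'$, hence equal to $Y'$, contradicting $B\ne\emptyset$; so the component of $Y'\setminus B$ meeting $\bar B$ exhausts $Y'\setminus B$. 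In particular $Y'\setminus B$ is a single connected set, which is more than the required finite union of compact or connected sets.

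The main obstacle is the third clause: given a connected set $C\subset Y$ with more than one point and a sequence $\{y_n\}_{n\in\w}\subset Y$ converging to a point $y\in\bar C$, one must produce a topological circle $S\subset Y$ that separates $C$ and contains infinitely many $y_n$. The idea is to work near $y$: discard the finitely many $y_n$ lying outside a fixed coordinate disk around $y$, pick a point $a\in C$ distinct from $y$ (possible since a connected Hausdorff space with more than one point is infinite), join $y$ to $a$ by an arc and thicken it to an embedded closed disk $\Delta$ with $y\in\partial\Delta$ and $a\in\operatorname{int}\Delta$; then reroute a sub-arc of $\partial\Delta$ near $y$ so that it passes through infinitely many of the $y_n$, which is possible since they accumulate at $y\in\partial\Delta$. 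After shrinking $\Delta$ so that $C$ still meets $Y\setminus\Delta$ (using a further point of $C$), the circle $S=\partial\Delta$ meets infinitely many $y_n$, while $C$ meets both $\operatorname{int}\Delta$ and $Y\setminus\Delta$, so $C\setminus S$ is disconnected; checking the various degenerate positions of the $y_n$ and of $C$ relative to $\Delta$ is the delicate part. Once Definition~\ref{d:var} is verified for $Y$, there is nothing left to prove: the corollary is precisely the special case of Corollary~\ref{c:main2} in which the target $2$-variety happens to be a $2$-manifold.
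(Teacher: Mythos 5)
Your proposal follows the paper's own route exactly: the paper derives this corollary in one line from Corollary~\ref{c:main2} together with the remark that every $2$-manifold is a $2$-variety, a remark the paper states without proof. Your sketch of that verification (first-countability, componnectedness via coordinate disks with connected complement, and the construction of a separating circle passing through infinitely many of the $y_n$) is sound in outline and in fact supplies more detail than the paper itself does, with the genuinely delicate step being the one you flag, namely rerouting $\partial\Delta$ through a subsequence of the $y_n$ while keeping a point of $C$ inside and a point of $C$ outside.
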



\section{Darboux injections into 3-varieties}\label{s:main3}

We recall that a {\em Peano continuum} is a connected locally connected compact metrizable space. By the Hahn-Mazurkiewicz Theorem \cite[8.14]{Nad}, each Peano continuuum is a continuous image of the unit interval $[0,1]$.

\begin{theorem}\label{t:main3} Let $X$ be a Peano continuum such that 
for every fence $K\subset X$ the space $X\setminus K$ has trivial first homology group $H_1(X\setminus K;G)$ for some coefficient group $G$. Any Darboux injection $f:X\to Y$ to a $3$-variety is an open topological embedding.
\end{theorem}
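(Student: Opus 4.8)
The plan is to run, one dimension higher, the proof of Theorem~\ref{t:main2}: the whole argument is organized around a single auxiliary lemma, and once that lemma is available the theorem follows by literally the same four-step bookkeeping ($f(X)$ is closed in $Y$, $f(X)$ is open in $Y$, $f^{-1}$ is continuous, $f$ is continuous) that derives Theorem~\ref{t:main2} from Lemma~\ref{l:main2}. Since a Peano continuum is connected, $X$ is connected, hence $f(X)$ is connected (as $f$ is Darboux) and lies in a single connected component $Y'$ of $Y$; by Definition~\ref{d:var} the component $Y'$ is first-countable and componnected. We may assume that $X$ contains more than one point.

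The lemma I would aim for is the exact analogue of Lemma~\ref{l:main2}: \emph{for every sequence $\{y_n\}_{n\in\w}\subset Y$ converging to a point $y\in\overline{f(X)}$ there is a compact connected $2$-variety $S\subset Y$ such that $f(X)\setminus S$ is disconnected, the set $\{n\in\w:y_n\in S\}$ is infinite, and $f{\restriction}f^{-1}(S)\colon f^{-1}(S)\to S$ is a homeomorphism.} To prove it, first apply the defining property of a $3$-variety (Definition~\ref{d:var}) to the nondegenerate connected set $C:=f(X)$ and the given sequence, obtaining a compact connected $2$-variety $S_0\subset Y$ that separates $f(X)$ and contains infinitely many $y_n$. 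Since $f(X)\setminus S_0$ is disconnected and $X$ is regular (being metrizable), Lemma~\ref{l:main} produces a closed nowhere dense separator $L\subset f^{-1}(S_0)$ of $X$. The decisive point is then to find a connected component $C$ of $L$ that \emph{cannot be separated by a fence}. Granted this, $C$ is a compact Hausdorff space that cannot be separated by a fence and $f{\restriction}C\colon C\to S_0$ is a Darboux injection into the $2$-variety $S_0$, so Theorem~\ref{t:main2} makes $f{\restriction}C\colon C\to S_0$ an open topological embedding. Then $f(C)$ is open in $S_0$ and, being homeomorphic to the compact space $C$, also closed; as $S_0$ is connected, $f(C)=S_0$, and the injectivity of $f$ forces $f^{-1}(S_0)=C$. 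Hence $f{\restriction}f^{-1}(S_0)=f{\restriction}C$ is a surjective open embedding, i.e.\ a homeomorphism, so $S:=S_0$ satisfies the lemma.

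With the lemma available the proof of the theorem copies the proof of Theorem~\ref{t:main2}. If $f(X)$ were not closed in $Y$, a sequence in $f(X)$ with limit $y\notin f(X)$ would, by the lemma, have infinitely many terms in a compact set $S$ with $S=f(f^{-1}(S))\subseteq f(X)$, forcing $y\in S\subseteq f(X)$, a contradiction; dually, if $f(X)$ were not open, a sequence in $Y\setminus f(X)$ with limit in $f(X)$ would have infinitely many terms in such an $S\subseteq f(X)$, again a contradiction; so $f(X)$ is closed-and-open in $Y'$ and equals $Y'$. Continuity of $f^{-1}\colon Y'\to X$ follows from the first-countability of $Y$ and the homeomorphisms $f{\restriction}f^{-1}(S)$ on compact $S$, exactly as in Theorem~\ref{t:main2}. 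Continuity of $f$ at a point $x$ follows, again as in Theorem~\ref{t:main2}, by shrinking a neighbourhood $O_{f(x)}$ so that $Y'\setminus O_{f(x)}=C_1\cup\dots\cup C_m$ with each $C_i$ compact or connected (possible since $Y'=f(X)$ is open and componnected), pulling the $C_i$ back by the now-continuous $f^{-1}$, discarding the compact ones, and ruling out $x\in\overline{f^{-1}(C_i)}$ for a connected $C_i$ by the Darboux property (as then $f(\{x\}\cup f^{-1}(C_i))=\{f(x)\}\cup C_i$ would be disconnected). Thus $f$ is a continuous bijection onto the open set $Y'$ with continuous inverse, hence an open topological embedding.

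The main obstacle is the step I glossed over in the lemma: producing a connected component of the separator $L\subset f^{-1}(S_0)$ that cannot be separated by a fence. This — together with the $n=3$ instance of Definition~\ref{d:var} — is where the hypothesis ``$H_1(X\setminus K;G)=0$ for some coefficient group $G$ and every fence $K\subset X$'' is indispensable. The mechanism I would try to make work: if no component of $L$ were suitable, then $L$ (or a separator of $X$ distilled from it, such as the frontier $\overline A\setminus A$ of a component $A$ of $X\setminus L$) ought to be ``cohomologically thin'' in the relevant degree — this is where one feeds in that fences have trivial \v Cech cohomology (Lemma~\ref{l:Cech}) and runs a Mayer--Vietoris / long-exact-sequence argument along the separating fence; on the other hand a genuine $2$-variety, unlike a fence, carries an essential separating circle (Definition~\ref{d:var}), and pulling this back along $f$ would make $X$ minus a fence hidden inside $L$ have nontrivial first homology, against the hypothesis — in the manifold case this final implication is precisely the Poincar\'e--Alexander duality computation of Propositions~\ref{p:multi} and \ref{p:2not}. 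Converting this heuristic into a rigorous argument — in effect transferring the set-theoretic condition ``separable by a fence'' into the cohomological framework of Section~\ref{s:main3} without assuming $X$ is a manifold — is the crux, and it is also the reason the method stops here: Proposition~\ref{p:multi} needs $k\le n-2$, i.e.\ $1\le n-2$.
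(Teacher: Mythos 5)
Your overall architecture is exactly the paper's: an analogue of Lemma~\ref{l:main2} with $S$ a compact connected $2$-variety, Theorem~\ref{t:main2} applied to a piece of the separator $L\subset f^{-1}(S)$, and then the same four-step derivation ($f(X)$ closed, $f(X)$ open, $f^{-1}$ continuous, $f$ continuous). All of that part is correct. But the step you flag as the crux and leave as a heuristic is a genuine gap, and moreover the object you propose to look for --- a \emph{connected component} of $L$ that cannot be separated by a fence --- is not the right one: a component of $L$ need not be a separator of $X$ at all, and nothing prevents every component of $L$ from being separable by a fence. The paper's actual mechanism is different: fix two points $a,b$ in distinct components of $X\setminus L$ and pass, via Zorn's Lemma, to an \emph{irreducible} closed separator $M\subset L$ between $a$ and $b$ (the chain condition is verified by joining $a$ to $b$ with a compact connected set inside $X\setminus\bigcap\mathcal L$, using that the Peano continuum $X$ is locally path-connected, and invoking compactness). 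It is this minimality, not any cohomological thinness of $L$, that makes the Mayer--Vietoris argument close.

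Concretely: if a fence $J$ separated $M$, write $M\setminus J=U_1\cup U_2$ and set $M_i:=M\setminus U_i$, so $M_1\cap M_2=J$ and $M_1\cup M_2=M$. Each $M_i$ is a \emph{proper} closed subset of $M$, so by irreducibility it does \emph{not} separate $a$ from $b$; hence the $0$-cycle $g\cdot(a-b)$ (for a nonzero $g\in G$) represents zero in $H_0(X\setminus M_1;G)$ and $H_0(X\setminus M_2;G)$. The hypothesis $H_1(X\setminus J;G)=0$ and exactness of
$$0=H_1(X\setminus J;G)\to H_0(X\setminus M;G)\to H_0(X\setminus M_1;G)\oplus H_0(X\setminus M_2;G)$$
then force $[g\cdot(a-b)]=0$ in $H_0(X\setminus M;G)$, contradicting the fact that $M$ separates $a$ from $b$. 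Taking $J=\emptyset$ shows in particular that $M$ is connected, so Theorem~\ref{t:main2} applies to $f{\restriction}M:M\to S$ and your concluding argument ($f(M)=S$, $f^{-1}(S)=M$) goes through verbatim. Note also that Propositions~\ref{p:multi} and \ref{p:2not} play no role inside this lemma; they are used only later, in Corollary~\ref{c:main3}, to verify that a rational homology $3$-sphere satisfies the hypothesis ``$H_1(X\setminus K;G)=0$ for every fence $K$'' --- so your suggestion to ``pull back an essential separating circle of the $2$-variety'' is not needed and is not how the hypothesis enters.
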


By analogy with the proof of Theorem~\ref{t:main2}, Theorem~\ref{t:main3} can be derived from the following lemma in which the function $f:X\to Y$ and spaces $X,Y$ satisfy the assumptions of Theorem~\ref{t:main3}. 

\begin{lemma}\label{l:main3} For any sequence $\{y_n\}_{n\in\w}\subset Y$ that converges to a point $y\in \overline{f(X)}$ there exists a set $S\subset Y$ such that 
\begin{enumerate}
\item $S$ is a compact connected $2$-variety;
\item  $f(X)\setminus S$ is disconnected;
\item the set $\{n\in\w:y_n\in S\}$ is infinite;
\item the map $f{\restriction}f^{-1}(S):f^{-1}(S)\to S$ is a homeomorphism.
\end{enumerate}
\end{lemma}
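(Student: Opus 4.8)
The plan is to imitate the proof of Lemma~\ref{l:main2}, replacing the circle $\bar\IR$ by a compact connected $2$-variety and Lemma~\ref{l8} by Theorem~\ref{t:main2}; the homological hypothesis on $X$ will be used at exactly one point, to guarantee that the separator furnished by Lemma~\ref{l:main} contains a piece that cannot be separated by a fence.

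We may assume that $X$ has more than one point (the case $|X|=1$ is degenerate and irrelevant for deducing Theorem~\ref{t:main3}), so that $C:=f(X)$ is a connected subset of $Y$ containing more than one point. Since the sequence $\{y_n\}_{n\in\w}$ converges to $y\in\bar C=\overline{f(X)}$ and $Y$ is a $3$-variety, Definition~\ref{d:var} yields a compact connected $2$-variety $S\subset Y$ that separates $f(X)$ and contains infinitely many of the points $y_n$. These are exactly the conditions (1)--(3), so it remains to establish (4): that $f{\restriction}f^{-1}(S):f^{-1}(S)\to S$ is a homeomorphism.

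Since $f(X)\setminus S$ is disconnected and $X$, being metrizable, is regular, Lemma~\ref{l:main} provides a closed nowhere dense separator $L\subset X$ with $L\subseteq f^{-1}(S)$. The heart of the matter --- and the step I expect to be the main obstacle --- is to extract from $L$ a compact connected subset $Z\subseteq L$ that cannot be separated by a fence. One cannot simply take a connected component of $L$: a $2$-sphere with an arc attached at a single point is a continuum that \emph{is} separated by a fence (namely the attaching point), so passing to a proper subcontinuum may be unavoidable. Here the hypotheses on $X$ enter. Because $L$ separates $X$ and $X$ satisfies the homological assumption of Theorem~\ref{t:main3} --- so that, in the model case of a rational homology $3$-sphere, Alexander duality in $X$ makes $\check H^2(L;G)\ne 0$ --- the set $L$ carries a nonzero second \v Cech cohomology class, whereas every fence carries none (Lemma~\ref{l:Cech}). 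A Mayer--Vietoris argument then finishes this point: whenever a subcontinuum of $L$ is cut by a fence $K$ into two closed halves, their intersection lies in $K$ and hence has trivial first \v Cech cohomology by Lemma~\ref{l:Cech} (a closed subset of a fence is again a fence), so a nonzero second cohomology class must survive in one \emph{proper} closed half; iterating this (transfinitely if necessary) and passing to the intersection of the resulting descending chain of subcontinua produces the desired $Z\subseteq L$, which can no longer be separated by a fence. The delicate part is the limit step, where one must check that the essential second cohomology class is not lost in the inverse limit --- this is where the \v Cech‑cohomological machinery of the previous section (and, via it, the assumption on $H_1$ of complements of fences) really does the work.

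Granting such a $Z$, the remainder is routine and parallels Lemma~\ref{l:main2}. The restriction $g:=f{\restriction}Z:Z\to S$ is a Darboux injection, since any connected subset of $Z$ is connected in $X$ and hence has connected image under $f$; moreover $Z$ is a compact Hausdorff space that cannot be separated by a fence and $S$ is a $2$-variety, so by Theorem~\ref{t:main2} the map $g$ is an open topological embedding. Being an embedding, $g$ is continuous, so $f(Z)=g(Z)$ is compact and hence closed in the Hausdorff space $S$; being an open embedding, $f(Z)$ is also open in $S$; and since $S$ is connected and $f(Z)\ne\emptyset$, it follows that $f(Z)=S$. Finally, if $x\in f^{-1}(S)$ then $f(x)\in S=f(Z)$, so $f(x)=f(z)$ for some $z\in Z\subseteq L\subseteq f^{-1}(S)$, and injectivity of $f$ gives $x=z\in Z$; therefore $f^{-1}(S)=Z$ and $f{\restriction}f^{-1}(S)=g:Z\to S$ is a homeomorphism, which is (4). (The degenerate possibility that $S$ is a single point is harmless: since $S$ separates $f(X)$ it meets $f(X)$, so $f^{-1}(S)$ is a single point and (4) is immediate.)
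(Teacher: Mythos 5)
Your first and last steps match the paper's: conditions (1)--(3) come straight from Definition~\ref{d:var} applied to the connected set $f(X)$, and once one has a compact subset $Z\subset f^{-1}(S)$ that cannot be separated by a fence, Theorem~\ref{t:main2} together with the clopen-in-connected argument gives $f(Z)=S$, hence $f^{-1}(S)=Z$ and condition (4). The gap is exactly where you say it is, and your proposed repair does not close it. You want to start from $\check H^2(L;G)\ne 0$, obtained ``by Alexander duality in $X$''; but the hypotheses of Theorem~\ref{t:main3}, under which this lemma is stated, only say that $X$ is a Peano continuum with $H_1(X\setminus K;G)=0$ for every fence $K$ --- $X$ is not assumed to be a manifold, or even an ANR, so there is no Alexander duality to invoke and no reason for $L$ to carry a nonzero second \v Cech class. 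Even in the manifold case your descent is only sketched: the successor step needs a Mayer--Vietoris sequence in \v Cech cohomology for the closed decomposition $Z=Z_1\cup Z_2$ with $Z_1\cap Z_2$ a fence, the limit step needs continuity of \v Cech cohomology to see that the distinguished class survives in $\bigcap_\alpha Z_\alpha$, and you acknowledge that you have not verified this.

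The paper obtains the required $Z$ much more cheaply and in the right generality. Fix points $a,b$ lying in different components of $X\setminus L$ and use Zorn's Lemma (the chain condition is checked using compactness and the local path-connectedness of the Peano continuum $X$) to find an \emph{irreducible} closed separator $M\subset L$ between $a$ and $b$. If a fence $J$ separated $M$, then writing $M=M_1\cup M_2$ with $M_1\cap M_2=J$, irreducibility forces $a$ and $b$ to lie in the same path-component of each $X\setminus M_i$, so the $0$-cycle $g\cdot(a-b)$ dies in $H_0(X\setminus M_1;G)$ and in $H_0(X\setminus M_2;G)$; the Mayer--Vietoris sequence of the pair $(X\setminus M_1,X\setminus M_2)$ together with the hypothesis $H_1(X\setminus J;G)=0$ then kills it in $H_0(X\setminus M;G)$, contradicting the fact that $M$ separates $a$ from $b$. (Taking $J=\emptyset$ this also shows $M$ is connected.) This uses only singular homology of open subsets of $X$ and exactly the assumption of Theorem~\ref{t:main3} --- no second cohomology, no duality, no transfinite limit of cohomology classes. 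You should replace your extraction of $Z$ by this irreducible-separator argument.
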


\begin{proof}  By the definition of a $3$-variety, there exists a set $S\subset Y$ satisfying the conditions (1)--(3). By Lemma~\ref{l:main}, the preimage $f^{-1}(S)$ contains a closed separator $C$ of the set $X$. Fix any distinct points $a,b\in X$ that belong to different connected components of $X\setminus C$. So, $C$ is a separator of $X$ between the points $a$ and $b$. A closed separator $M$ of $X$ between $a,b$ is called an {\em irreducible separator between} $a,b$ if $M$ coincides with each closed separator $F\subset M$ of $X$ between $a$ and $b$.

\begin{claim} The separator $C$ of $X$ contains a closed irreducible separator $M\subset C$ between the points $a$ and $b$.
\end{claim}

\begin{proof}  Let $\mathcal S$ be the family of closed subsets $D$ of $C$ that separate $X$ between the points $a$ and $b$. The family $\mathcal S$ is partially ordered by the inclusion relation.

Let us show that for any linearly ordered subfamily $\mathcal L\subset\mathcal S$ the intersection $\bigcap\mathcal L$ belongs to $\mathcal S$. First observe that $\bigcap\mathcal L$ is a closed subset of $C$, being the intersection of closed sets in $L$.

Assuming that $\bigcap\mathcal L\notin \mathcal S$, we conclude that the points $a,b$ belong to the same connected component of $X\setminus\bigcap\mathcal L$. 
Since the space $X$ is locally path-connected, so is its open subset $X\setminus\bigcap\mathcal L$. Then we can find a compact connected subset $P\subset X\setminus\bigcap\mathcal L$ containing the points $a,b$. Taking into account that  for every $L\in\mathcal L$ the points $a,b$ lie in different connected components of $X\setminus L$, we conclude that the intersection $L\cap P$ is not empty. By the compactness of $P$ the linearly ordered family $\{L\cap P:L\in\mathcal L\}$ of non-empty closed subsets of $P$ has non-empty intersection, which coincides with the intersection $P\cap\bigcap\mathcal L=\emptyset$. The obtained contradiction completes the proof of the inclusion $\bigcap\mathcal L\in\mathcal L$. 

By the Zorn Lemma, the family $\mathcal S$ contains a minimal element $M\in\mathcal  S$. It is clear that $M$ is irreducible separator of $X$ between the points $a,b$. 
\end{proof}

\begin{claim}\label{cl:main3} The space $M$ cannot be separated by a fence.
\end{claim}

\begin{proof} To derive a contradiction, assume that $M$ is separated by some fence $J\subset M$.

Then $M\setminus J=U_1\cup U_2$ for some disjojnt non-empty open subspaces $U_1,U_2$ of $M$. Consider the closed subsets $M_1:=M\setminus U_1$ and $M_2:=M\setminus U_2$ in $M$ and observe that $M_1\cap M_2=J$ and $M_1\cup M_2=M$. 

Fix any non-zero element $g\in G$ and consider the singular $0$-cycle $c:=g\cdot(a-b)$ in $X$. 
The minimality of $M$ ensures that for every $i\in\{1,2\}$ the points $a,b$ belong to the same connected component of the locally path-connected space $X\setminus M_i$. Consequently, the points $a,b$ can be linked by a path in $X\setminus M_i$, which implies that the cycle $c$ represents zero in the homology groups $H_1(X\setminus M_1;G)$ and $H_1(X\setminus M_2;G)$. By our assumption, $H_1(X\setminus J;G)=0$.

Writing the initial piece of the Mayer-Vietoris exact sequence of the pair $(X\setminus M_1,X\setminus M_2)$, we obtain the exact sequence of Abelian groups:
$$
\xymatrix{
0=H_1(X\setminus J;G)\ar^{\partial_*}[r]& H_0(X\setminus M;G)\ar^<<<<{(e_1,e_2)}[r]&H_0(X\setminus M_1;G)\oplus H_0(X\setminus M_2;G),
}
$$
where $e_i:H_0(X\setminus M;G)\to H_0(X\setminus M_i;G)$ is the homomorphism induced by the identity inclusion $X\setminus M\hookrightarrow X\setminus M_i$ for $i\in\{1,2\}$.

 Since the 0-cycle $c$ represents zero in the homology groups $H_0(X\setminus M_1;G)$ and $H_0(X\setminus M_2;G)$, its homology class $[c]\in H_0(X\setminus M;G)$ is annulated by the homomorphism $(e_1,e_2)$. By the exactness of the Mayer-Vietoris sequence, $[c]=0$ in $H_0(X\setminus M;G)$, which is not true as $a$ and $b$ are contained in different connected components of $X\setminus M$.
\end{proof}

By Claim~\ref{cl:main3}, the space $M$ cannot be separated by a fence. Since $S$ is a  $2$-variety, we can apply Theorem~\ref{t:main2} and conclude that $f{\restriction}M:M\to S$ is an open topological embedding. The compactness of $M$ ensures that $f(M)$ is closed-and-open subset in $S$ and hence $f(M)=S$ by the connectedness of $S$.
Since $f$ is injective, $M=f^{-1}(S)$ and $f{\restriction}f^{-1}(S):f^{-1}(S)\to S$ is a homeomorphism. This completes the proofs of Lemma~\ref{l:main3} and Theorem~\ref{t:main3}.
 \end{proof}








The following proposition characterizes rational homology 3-spheres.

\begin{proposition}\label{p:hs} For a connected closed $3$-manifold $X$ the following conditions are equivalent:
\begin{enumerate}
\item $X$ is a rational homology $3$-sphere;
\item the first rational homology group $H_1(X;\IQ)$ is trivial;
\item the first integral homology group $H_1(X)$ is finite;
\item the abelianization of the fundamental group $\pi_1(X)$ is finite.
\end{enumerate}
\end{proposition}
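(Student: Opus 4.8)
The plan is to prove the cycle of implications $(1)\Rightarrow(2)\Rightarrow(3)\Rightarrow(4)\Rightarrow(1)$, using standard facts of algebraic topology about closed 3-manifolds together with the Universal Coefficient Theorem and Poincar\'e Duality.

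\textbf{The implication $(1)\Rightarrow(2)$} is immediate: if $X$ is a rational homology 3-sphere, then by definition $H_1(X;\IQ)\approx H_1(S^3;\IQ)=0$.

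\textbf{The implication $(2)\Rightarrow(3)$} uses the Universal Coefficient Theorem for homology, which gives a natural short exact sequence
$$0\to H_1(X)\otimes\IQ\to H_1(X;\IQ)\to \mathrm{Tor}(H_0(X),\IQ)\to 0.$$
Since $X$ is a connected closed 3-manifold, it is a compact triangulable (or at least homotopy-finite) space, so the integral homology group $H_1(X)$ is finitely generated; moreover $H_0(X)\approx\IZ$ is torsion-free, so the Tor term vanishes and $H_1(X)\otimes\IQ\approx H_1(X;\IQ)=0$. A finitely generated abelian group with $H_1(X)\otimes\IQ=0$ is finite (its free rank is zero), giving $(3)$.

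\textbf{The implication $(3)\Rightarrow(4)$} is a purely algebraic consequence of the Hurewicz/Poincar\'e theorem: the abelianization of $\pi_1(X)$ is naturally isomorphic to $H_1(X)$, so if $H_1(X)$ is finite then so is the abelianization of $\pi_1(X)$.

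\textbf{The implication $(4)\Rightarrow(1)$} is the step I expect to carry the real content. From $H_1(X)\approx\pi_1(X)^{\mathrm{ab}}$ finite we get, running the argument of $(3)\Rightarrow(2)$ backwards via the Universal Coefficient Theorem, that $H_1(X;\IQ)=0$. Since $X$ is a closed connected 3-manifold it is $\IQ$-orientable (indeed orientable, by the finiteness of $H_1(X)$ one sees the orientation obstruction in $H^1(X;\IZ_2)\approx \mathrm{Hom}(H_1(X),\IZ_2)$ may be nonzero, so more carefully: every closed manifold is $\IQ$-orientable because $H_n(X;\IQ)\approx\IQ$ follows from $\IZ_2$-orientability combined with... ) — here I would instead simply invoke that a closed connected $n$-manifold always satisfies $H_n(X;\IQ)\approx\IQ$, which together with Poincar\'e Duality over the field $\IQ$ gives $H_k(X;\IQ)\approx H^{n-k}(X;\IQ)\approx H_{n-k}(X;\IQ)$ (the last isomorphism by the Universal Coefficient Theorem over a field). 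Thus for $n=3$: $H_3(X;\IQ)\approx\IQ$, $H_0(X;\IQ)\approx\IQ$, and $H_2(X;\IQ)\approx H^1(X;\IQ)\approx H_1(X;\IQ)=0$, while $H_k(X;\IQ)=0$ for $k\ge 4$ trivially. Hence $H_k(X;\IQ)\approx H_k(S^3;\IQ)$ for all $k$, i.e. $X$ is a rational homology 3-sphere.

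The main obstacle is getting the orientability bookkeeping right in $(4)\Rightarrow(1)$: one must be careful that $\IQ$-Poincar\'e Duality needs $\IQ$-orientability, but this is automatic for any closed connected manifold because the orientation double cover contributes only 2-torsion, which dies after tensoring with $\IQ$; concretely $H_n(X;\IQ)\approx\IQ$ for every closed connected $n$-manifold. Once that is in hand, everything else is a routine application of the Universal Coefficient Theorem over the field $\IQ$ and the fact that closed 3-manifolds have finitely generated homology.
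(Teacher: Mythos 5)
Your overall skeleton (the cycle $(1)\Rightarrow(2)\Rightarrow(3)\Rightarrow(4)\Rightarrow(1)$, with UCT for $(2)\Rightarrow(3)$ and Hurewicz for $(3)\Rightarrow(4)$) matches the paper's argument, and those three implications are fine. The genuine gap is in $(4)\Rightarrow(1)$, precisely at the point you yourself flagged as shaky: the assertion that \emph{every} closed connected $n$-manifold satisfies $H_n(X;\IQ)\approx\IQ$ is false. By Hatcher's Theorem~3.26, for a closed connected $n$-manifold $H_n(X;R)\approx R$ if $X$ is $R$-orientable and $H_n(X;R)\approx\{r\in R:2r=0\}$ otherwise; so a non-orientable closed $3$-manifold has $H_3(X;\IQ)=0$, not $\IQ$ (compare $H_2(\IR P^2;\IQ)=0$). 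Your heuristic that ``the orientation double cover contributes only 2-torsion, which dies after tensoring with $\IQ$'' points the wrong way: it is the top \emph{integral} homology that already vanishes, so tensoring with $\IQ$ kills it rather than restoring a copy of $\IQ$. Moreover your subsequent use of Poincar\'e duality over $\IQ$ ($H_2\approx H^1$) itself presupposes $\IQ$-orientability, so the step is circular as written: the $\IQ$-orientability of $X$ is a \emph{consequence} of hypothesis (2)/(3)/(4) that must be established, not assumed.

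The paper closes exactly this hole with an Euler-characteristic argument, and you can repair your proof the same way. From Hatcher~3.26 one only gets $\dim_\IQ H_3(X;\IQ)\le 1$ in general. Since every closed manifold of odd dimension has $\chi(X)=0$, and $H_1(X;\IQ)=0$ by your reduction from (4), one obtains
$$0=\chi(X)=1-0+\dim H_2(X;\IQ)-\dim H_3(X;\IQ),$$
so $\dim H_3(X;\IQ)=1+\dim H_2(X;\IQ)\ge 1$; combined with $\dim H_3(X;\IQ)\le 1$ this forces $\dim H_3(X;\IQ)=1$ and $\dim H_2(X;\IQ)=0$ simultaneously, with no appeal to duality at all. (As a byproduct this shows conditions (2)--(4) already rule out non-orientable $3$-manifolds, which is why your intended conclusion is true even though your stated justification is not.)
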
 

\begin{proof} The implication $(1)\Ra(2)$ is trivial.
\smallskip

$(2)\Ra(1)$ Assume that $H_1(X;\IQ)=0$. Since the 3-manifold $X$ is connected, it is path-connected and hence $H_0(X;\IQ)\approx \IQ\approx H_0(S^3;\IQ)$.  By Theorem~3.26 in \cite{Hat}, $\dim H_3(X;\IQ)\le 1$ and by Lemma 2.34 and Corollary 3A.6 in \cite{Hat}, $H_k(X;\IQ)=0=H_k(S^3;\IQ)$ for all $k>\dim(X)=3$.

By Corollary 3.37 in \cite{Hat}, closed manifolds of odd dimension have zero Euler characteristic. Consequently, $\chi(X)=0$. By Theorem 2.44 \cite{Hat}, 
\begin{multline*}0=\chi(X)=\dim H_0(X;\IQ)-\dim H_1(X;\IQ)+\dim H_2(X;\IQ)-\dim H_3(X;\IQ)=\\
=1-0+\dim H_2(X;\IQ)-\dim H_3(X;\IQ),
\end{multline*}
which implies that $\dim H_2(X;\IQ)=0$ and $\dim H_3(X;\IQ)=1$. Then $H_2(X;\IQ)=0=H_2(S^3;\IQ)$ and $H_3(X;\IQ)\approx \IQ\approx H_3(S^3;\IQ)$. Now we see that $H_k(X;\IQ)\approx H_k(S^3;\IQ)$ for all $k\ge 0$, which means that $X$ is a rational homological $3$-sphere.
\smallskip

$(2)\Ra(3)$ Assume that $H_1(X;\IQ)=0$. By Corollaries A.8 and A.9 in \cite{Hat}, the homology group $H_1(X)$ of the compact 3-manifold $X$ is finitely generated. By Corollary 3A.6 in \cite{Hat}, $H_1(X)\otimes\IQ\approx H_1(X;\IQ)=0$, which implies that the group $H_1(X)$ is a torsion group and being finitely generated is finite.
\smallskip

$(3)\Ra(2)$ Assuming that the homology group $H_1(X)$ is finite and applying Corollary 3A.6 in \cite{Hat}, we conclude that $H_1(X;\IQ)\approx H_1(X)\otimes \IQ=0$. 
\smallskip

The equivalence $(3)\Leftrightarrow(4)$ follows from Theorem 2A.1 \cite{Hat} saying that the homology group $H_1(X)$ of the path-connected space $X$ 
is isomorphic to the abelianization of the fundamental group $\pi_1(X)$.
\end{proof}

\begin{corollary}\label{c:main3}  Any Darboux injection $f:X\to Y$ from a rational homology $3$-sphere $X$ to a $3$-variety $Y$ is an open topological embedding.
\end{corollary}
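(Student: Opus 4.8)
The plan is to obtain Corollary~\ref{c:main3} as an immediate consequence of Theorem~\ref{t:main3}, so the whole task reduces to checking that a rational homology $3$-sphere $X$ fulfils the hypotheses of that theorem. First I would observe that $X$ is a Peano continuum: as a closed manifold it is compact and metrizable, every manifold is locally connected, and $X$ is connected since $H_0(X;\IQ)\approx H_0(S^3;\IQ)\approx\IQ$.

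Next I would take the coefficient group to be $G:=\IQ$, which is a non-trivial countable abelian group and hence an admissible coefficient group in the sense of Section~\ref{s:nvar}. By the definition of a rational homology $3$-sphere (equivalently, by the implication $(1)\Ra(2)$ of Proposition~\ref{p:hs}) we have $H_1(X;\IQ)\approx H_1(S^3;\IQ)=0$. I would also record that $X$ is $\IQ$-orientable: since $H_3(X;\IQ)\approx H_3(S^3;\IQ)\approx\IQ\neq 0$ and $H_3(X;\IQ)\approx H_3(X)\otimes\IQ$, the top integral homology group $H_3(X)$ is non-trivial, so the closed connected $3$-manifold $X$ is orientable; an orientable closed connected manifold is $G$-orientable for every coefficient group $G$, in particular for $G=\IQ$.

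With these facts available, I would fix an arbitrary fence $K\subset X$ and apply Proposition~\ref{p:multi} to the fence $K$ in the $\IQ$-orientable closed $3$-manifold $X$ with $k=1\le n-2=1$; using $H_1(X;\IQ)=0$ this gives $H_1(X\setminus K;\IQ)=0$. Hence $X$ meets all the assumptions of Theorem~\ref{t:main3}, and therefore every Darboux injection $f:X\to Y$ into a $3$-variety $Y$ is an open topological embedding.

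Essentially every step here is a reference to an earlier result, so I do not expect any serious obstacle; the one point meriting a moment of care is the $\IQ$-orientability of $X$, which I settle above through the non-vanishing of $H_3(X;\IQ)$.
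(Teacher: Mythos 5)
Your proposal is correct and follows essentially the same route as the paper: verify that $X$ is $\IQ$-orientable from $H_3(X;\IQ)\approx\IQ$, note $H_1(X;\IQ)=0$, invoke Proposition~\ref{p:multi} to kill $H_1(X\setminus K;\IQ)$ for every fence $K$, and then apply Theorem~\ref{t:main3}. Your explicit check that $X$ is a Peano continuum and your slightly different derivation of $\IQ$-orientability (via non-vanishing of $H_3(X)$ rather than a direct citation of Hatcher's Theorem~3.26) are both sound and only cosmetic departures from the paper's argument.
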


\begin{proof} Since $H_3(X;\IQ)\approx H_3(S^3;\IQ)\approx \IQ$, the closed 3-manifold $X$ is $\IQ$-orientable (see Theorem 3.26 in \cite{Hat}). By our assumption, $X$ is $\IQ$-simply-connected. So, $X$ is path-connected and has trivial homology group $H_1(X;\IQ)$. By Proposition~\ref{p:multi}, for any fence $A\subset X$ the complement $X\setminus A$ also has trivial homology group $H_1(X\setminus A;\IQ)$. Applying Theorem~\ref{t:main3}, we conclude that each Darboux injection $f:X\to Y$ is an open topological embedding.
\end{proof}

Since each $3$-manifold is a $3$-variety, Corollary~\ref{c:main3} implies the following corollary that proves Theorem~\ref{t:main}(3).

\begin{corollary} Any Darboux injection $f:X\to Y$ from a rational homology $3$-sphere $X$ to a $3$-manifold $Y$ is an open topological embedding.
\end{corollary}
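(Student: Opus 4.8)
The plan is to deduce the statement directly from Corollary~\ref{c:main3}: once we know that every $3$-manifold is a $3$-variety in the sense of Definition~\ref{d:var}, a rational homology $3$-sphere $X$ admits, by Corollary~\ref{c:main3}, only open topological embeddings into such a $Y$, which is exactly the claim. So the real work is to justify the Remark following Definition~\ref{d:var}, i.e. to check that each $n$-manifold of dimension $n\ge 2$ is an $n$-variety, which I would prove by induction on $n$; the case $n=3$ is the one needed here, and the lower cases feed in through the inductive structure (recall that compact connected $1$-varieties are circles and that $2$-manifolds are $2$-varieties).

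So let $Y$ be a $3$-manifold. First-countability is immediate since manifolds are metrizable. Each connected component $Y'$ of $Y$ is an open submanifold, and it is componnected: the coordinate balls $B$ whose closure lies in a single chart of $Y'$ form a base of $Y'$, and removing such a tame ball from a connected manifold of dimension $\ge 2$ leaves a connected (indeed path-connected) set, so $Y'\setminus B$ is a single connected set. The substantive clause of Definition~\ref{d:var}($2$) is the separator axiom: given a connected $C\subset Y$ with $|C|>1$ and a sequence $y_n\to y\in\overline C$, one must produce a compact connected $2$-variety $S\subset Y$ that separates $C$ and contains infinitely many $y_n$; I would look for $S$ among embedded $2$-spheres, which are $2$-manifolds, hence $2$-varieties by the inductive hypothesis. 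Work in a chart $U$ around $y$; since $y\in\overline C$ and $C$ is connected with more than one point, $C$ meets every small ball about $y$ and contains a point $c_0$ at positive distance from $y$, so any \emph{round} sphere $\partial B(y,r)\subset U$ of sufficiently small radius already separates $C$. The trouble is that a round sphere meets at most finitely many $y_n$. To repair this, pass to a subsequence of $(y_n)$ lying in $U$; the set $\{y\}\cup\{y_n\}$ is then homeomorphic to a convergent sequence together with its limit, so it lies on a tame embedded arc $A\subset U$ contained in an arbitrarily small neighborhood of $y$, and one takes $S$ to be an embedded $2$-sphere containing $A$ (after an ambient taming, realize $A$ as a diameter of a round $2$-disk $D$ and glue a graphical cap onto $D$ along $\partial D$). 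Then $S$ is a compact connected $2$-variety passing through infinitely many $y_n$, and by the Schoenflies theorem in $\IR^3$ it bounds a $3$-ball $W\subset U$; choosing $W$ small enough that $c_0\notin\overline W$ and, crucially, so that $W$ lies on the local side of $S$ at $y$ from which $C$ accumulates at $y$, we get that $C$ meets both $W$ and its complement, so $S$ separates $C$. The four clauses of Lemmas~\ref{l:main2} and \ref{l:main3} then follow exactly as in the proofs of Theorems~\ref{t:main2} and \ref{t:main3}, via Lemma~\ref{l:main} together with Lemma~\ref{l8}.

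The main obstacle is precisely this simultaneous demand: the separating surface must both pass through infinitely many of the \emph{prescribed} points $y_n$ and genuinely disconnect the \emph{given} connected set $C$. A naive radial-graph sphere centred at $y$ cannot meet infinitely many $y_n$, because its radial profile would be forced to tend to $0$ in the limiting direction of the $y_n$, contradicting positivity; this is what forces one to route the surface through a tame arc carrying the points and then to re-establish separation by a careful choice of which side of the surface to enclose. One must also deal with the case where $y$ is a boundary point of $Y$, where the separating $2$-variety should be taken to be a disk rather than a sphere. For $n$-manifolds with $n>3$ the same scheme would work verbatim, still relying on the inductive hypothesis to know that the separating $(n-1)$-sphere is an $(n-1)$-variety.
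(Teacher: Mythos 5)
Your reduction is exactly the paper's: the corollary is obtained by citing Corollary~\ref{c:main3} together with the assertion (which the paper states only as an unproved remark after Definition~\ref{d:var}) that every $3$-manifold is a $3$-variety. That part of your argument is fine, and it is to your credit that you try to actually verify the remark. The first-countability and componnectedness clauses are handled correctly: the complement of a small tame coordinate ball in a connected manifold of dimension at least $2$ is indeed connected.

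The gap is in your verification of the separator clause. Your sphere $S=D\cup(\mathrm{cap})$ is built from an arc $A$ through $y$ and infinitely many $y_n$, and you restore separation of $C$ by choosing on which side of the disk $D$ to attach the cap, namely ``the side from which $C$ accumulates at $y$.'' But $C$ need not accumulate at $y$ from either open side of $D$: it can approach $y$ inside $S$ itself. Concretely, take $Y=\IR^3$, $y=0$, $y_n=(1/n,1/n^2,0)$ and $C=\{(t,0,0):0<t\le 1\}$, so that $y\in\overline C$ but $y\notin C$ and no $y_n$ lies on $C$. The points $\{y\}\cup\{y_n\}$ lie in the plane $z=0$ and approach $0$ tangentially to the positive $x$-axis, so the natural arc $A$ and round disk $D\supset A$ lie in that plane and $D$ contains an initial segment $(0,t_0]\times\{0\}^2$ of $C$; then $C\cap S$ is exactly that segment, $C\setminus S=(t_0,1]\times\{0\}^2$ is connected, and $S$ separates nothing, no matter which side the cap is attached to --- there simply is no ``side from which $C$ accumulates.'' Repairing this requires choosing $A$ and $D$ so that $S$ meets $C$ in a set that genuinely disconnects it (for instance, arranging $S\cap C$ to be totally disconnected yet to contain interior points of $C$), and for an arbitrary connected set $C$ with $y\in\overline C$ nothing in your construction guarantees this; this is precisely the nontrivial content hidden in the paper's remark, and the boundary case you mention only in passing raises the same issue for disks. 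Until the separator axiom of Definition~\ref{d:var} is verified for arbitrary connected $C$, the appeal to Corollary~\ref{c:main3} rests on an unestablished premise.
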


\section{ Acknowledgement}

The authors express their sincere thanks to Du\v san Repov\v s, Chris Gerig and Yves de Cornulier for their help in understanding complicated techniques of Algebraic Topology that were eventually used in some proofs presented in this paper. Also special thanks are due to Alex Ravsky for a careful reading the final version of the paper and many valuable remarks.

\end{document}